\newtheorem{Lemma}{\textbf{Lemma}}
\newtheorem{Theorem}{\textbf{Theorem}}
\numberwithin{Definition}{section}
\titleformat*{\subsection}{\bfseries}
\titleformat*{\subsubsection}{\bfseries}
\numberwithin{equation}{section}
\newtheorem{remark}{\textbf{Remark}}
\numberwithin{Lemma}{section} 
\newtheorem{Example}{Example}
\numberwithin{Example}{section}
\numberwithin{Corollary}{section}
\journal{}
\begin{document}
\begin{frontmatter}
		
\title{High-order approximation to generalized Caputo  derivatives and generalized fractional advection-diffusion equations }

\author[add1]{Sarita Kumari}
\ead{saritakumari.rs.mat19@itbhu.ac.in}
\author[add1]{Rajesh K. Pandey \corref{cor}}
\ead{rkpandey.mat@iitbhu.ac.in}
\cortext[cor]{Corresponding author}
\address[add1]{Department of Mathematical Sciences, Indian Institute of Technology (BHU) Varanasi,\\ Varanasi-221005, Uttar Pradesh, India.} 
\author[add2]{R. P. Agarwal}
\ead{ravi.agarwal@tamuk.edu}
\address[add2]{Department of Mathematics, Texas A \& M University-Kingsville, Kingsville, Texas, USA.}
		
\begin{abstract}
In this article, a high-order time-stepping scheme based on the cubic interpolation formula is considered to approximate the generalized Caputo fractional derivative (GCFD). Convergence order for this scheme is $(4-\alpha)$, where $\alpha ~(0<\alpha<1)$ is the order of the GCFD. The local truncation error is also provided. Then, we adopt the developed scheme to establish a difference scheme for the solution of generalized fractional advection-diffusion equation with Dirichlet boundary conditions. Furthermore, we discuss about the stability and convergence of the difference scheme. Numerical examples are presented to examine the theoretical claims. The convergence order of the difference scheme is analyzed numerically, which is $(4-\alpha)$ in time and second-order in space. 
\end{abstract}
\begin{keyword}
Generalized Caputo fractional derivative, Generalized fractional advection-diffusion equation, difference scheme, stability, numerical solutions.
\end{keyword}
\end{frontmatter}
	

\section{Introduction}\label{Sec-1}

 This work include, the numerical solution of following generalized fractional advection-diffusion equation, which is studied in \cite{yadav2019high}
 \begin{align}\label{1.1}
 \begin{cases}
 &^{C}_{0}\mathcal{D}_{t;[\zeta(t),\omega(t)]}^{\alpha}U(x,t)= D\frac{\partial^{2}U(x,t)}{\partial x^{2}}-A\frac{\partial U(x,t)}{\partial x}+g(x,t),~~ x \in \Omega,~~~ t \in  (0,T],\\[1.1ex]
 &U(x,0)=U_{0}(x), \qquad x \in   \Bar{\Omega}=\Omega \cup \partial\Omega,\\[1ex]
 &U(0,t)=\rho_{1}(t),~~U(a,t)=\rho_{2}(t),\qquad t \in (0,T],
 \end{cases}
 \end{align} 
 where $\Omega=(0,a)$  is a bounded domain with boundary $\partial \Omega$, and the notation $^{C}_{0}\mathcal{D}_{t,[\zeta(t),\omega(t)]}^{\alpha}$ denotes the GCFD (defined in \cite{agrawal2012some}, and related references therein) with respect to $t$ of order $\alpha$ is
 \begin{align}
 ^{C}_{0}\mathcal{D}_{t;[\zeta(t),\omega(t)]}^{\alpha}U(t)=\frac{[\omega(t)]^{-1}}{\Gamma(1-\alpha)}\int_{0}^{t}\frac{[\omega(s)U(s)]'}{[\zeta(t)-\zeta(s)]^{\alpha}}~ds,~~~ 0<\alpha<1,
 \end{align}
 where $\partial_{s} = \frac{\partial}{\partial s}$,
  parameters $D>0$ is the diffusivity, $A>0$ is the advection constant and $U$ is the solute concentration, $g$, $U_{0}$, $\rho_{1}$ and $\rho_{2}$ are continuous functions on their respective domains with $U_{0}(0)=\rho_{1}(0)$ and $U_{0}(a)=\rho_{2}(0)$. Here scale, weight are sufficiently regular functions and our model \eqref{1.1} reduces to the diffusion problem when $A=0$. Advection-diffusion equation is basically a transport problem that transport a passive scalar quantity in a fluid flow. Due to diffusion and advection, this model represents physical phenomenon of species concentration for mass transfer and temperature in heat transfer; for more details, we refer to \cite{benson2000application,zhou2003application, baleanu2017chaos}, and \cite{dan2005solving,verwer1996implicit,Dehghan2004,mohebbi2010high, owolabi2020high} for further history and significance of advection-diffusion equation in physics, chemistry and biology. 
 \par
 Mostly used fractional derivatives in the problem formulation are the Riemann-Liouville and the Caputo derivatives \cite{podlubny1998fractional}. In the year 2012, the generalizations of fractional integrals and derivatives were discussed by Agrawal \cite{agrawal2012some}. Two functions, scale~ $\zeta(t)$ and weight $\omega(t)$ in one parameter, appear in the definition of the generalized fractional derivative of a function $U(t)$. If $\omega(t)=1$ and $\zeta(t)=t$ then generalized fractional derivative reduces to the Riemann-Liouville (R-L) and the Caputo derivative, whereas if $\omega(t)=1$, $\zeta(t)=ln(t)$, and  $\omega(t)=t^{\sigma \eta}$, $\zeta(t)=t^{\sigma}$ then it will convert to Hadamard \cite{anatoly2001hadamard}, and modified Erd\'elyi-Kober fractional derivatives, respectively. In \cite{gaboury2013some} studied the generalized form of R-L and the Hadamard fractional integrals, which is a special case of the Erd\'elyi-Kober generalized fractional derivative, and some properties of this operator. Atangana and Baleanu \cite{atangana2016new} discussed a new fractional derivative with non-singular kernel and used this derivative in the formation of fractional heat transfer model. So, we obtain different types of fractional derivatives for different choices of weight and scale functions. In generalized derivative, scale function $\zeta(t)$ manages the considered time domain, it can stretch or contract accordingly to capture the phenomena accurately over desired time range. The weight function $\omega(t)$ allows the events to be estimated differently at different time.\par
 Over the last decade, many numerical methods were investigated to approximate the Caputo fractional derivative. For example, Mustapha \cite{mustapha2020l1} presented $L1$ approximation formula to solve fractional reaction-diffusion equation and second order error bound discussed on non-uniform time meshes. Alikhanov \cite{alikhanov2015new} constructed $L2-1_{\sigma}$ formula to approximate Caputo fractional time derivative and then used this derived scheme in solving time fractional diffusion equation with variable coefficients. Abu Arqub  \cite{abu2018robin} considered reproducing kernel algorithm
 for approximate solution of the nonlinear time-fractional PDE’s with initial and Robin boundary conditions. Li and Yan \cite{li2018error} discussed the idea of \cite{lv2016error} (i.e. $L_{2}$ approximation formula for time discretization), also derived a new time discretization method with accuracy order $\mathcal {O}(\tau^{3-\alpha})$ and finite element method for spatial discretization.  Cao et al. \cite {cao2015high} presented a high-order approximation formula based on the cubic interpolation to approximate the Caputo derivative for the time fractional advection-diffiusion equation. Xu and Agrawal  \cite{xu2013numerical} used the finite difference method (FDM) to approximate the GCFD for solving the generalized fractional Burgers equation. Kumar et al. \cite{kumar2021numerical} presented $L1$ and $L2$ methods to approximate the generalized time fractional derivative which are defined as follows, respectively
 \begin{align}
& ^{C}_{0}\mathcal{D}_{t;[\zeta(t),\omega(t)]}^{\alpha}U(t)|_{t=t_{n}}=\frac{[\omega(t)]^{-1}}{\Gamma(1-\alpha)}\sum_{l=1}^{n}\bigg(\frac{\omega_{l}U_{l}-\omega_{l-1}U_{l-1}}{\zeta_{l}-\zeta_{l-1}}\bigg)\int_{t_{l-1}}^{t_{l}}[\zeta(t)-\zeta(s)]^{-\alpha}~(\partial_{s}\zeta(s))~ds+r_{1}^{n},\\ 
& ^{C}_{0}\mathcal{D}_{t;[\zeta(t),\omega(t)]}^{\alpha}U(t)|_{t=t_{n}}=\frac{[\omega(t)]^{-1}}{\Gamma(1-\alpha)}\sum_{l=1}^{n}\int_{t_{l-1}}^{t_{l}}[\zeta(t)-\zeta(s)]^{-\alpha}~\partial_{s}(\Pi_{l}\omega(s)U(s))~ds+r_{2}^{n}, 
 \end{align}
 where,
 \begin{align} 
 \Pi_{l}'(\omega(t)U(t))= \zeta'(t)\bigg\{\bigg[&\frac{2\zeta(t)-\zeta_{l}-\zeta_{l-1}}{(\zeta_{l-2}-\zeta_{l-1})(\zeta_{l-2}-\zeta_{l-1})}\bigg]\omega_{l-2}U_{l-2}+\bigg[\frac{2\zeta(t)-\zeta_{l}-\zeta_{l-2}}{(\zeta_{l-1}-\zeta_{l-2})(\zeta_{l-1}-\zeta_{l})}\bigg]\nonumber \\ &\omega_{l-1}U_{l-1}+\bigg[\frac{2\zeta(t)-\zeta_{l-1}-\zeta_{l-2}}{(\zeta_{l}-\zeta_{l-2})(\zeta_{l}-\zeta_{l-1})}\bigg]\omega_{l}U_{l} \bigg\}, \nonumber
 \end{align}
 where, the domain $[0,T]$ was discretized into $n$ equal subintervals i.e. $0=t_{0}<t_{1}<...<t_{N}=T$ with step-size $\tau=\frac{T}{N}$, and errors $r_{1}^{n}=\mathcal {O}(\tau^{2-\alpha}),~ r_{2}^{n}=\mathcal {O}(\tau^{3-\alpha})$ were shown in \cite{kumar2021numerical}. In this work, we discuss the numerical scheme for GCFD with convergence rate $(4-\alpha)$; for this accuracy we have to assume that $U'(t_{0})=0$, $U''(t_{0})=0$, $U'''(t_{0})=0.$ This idea is discussed in \cite {cao2015high} for Caputo derivative approximation, but the error bound was discussed directly. 
 \par Due to the non-local property of the fractional derivatives, the numerical solution of the fractional partial differential equations is a very difficult task \cite{diethelm2011efficient}. Several authors have presented some precise and efficient numerical methods for fractional advection-diffusion equation. For examples, Zheng et al. \cite{zheng2010note} used finite element method (FEM) for space fractional advection-diffusion equation. Mardani et al. \cite{mardani2018meshless} discussed meshless moving least square method for solving the time-fractional advection-diffusion equation with variable coefficients. Cao et al. \cite{cao2015high} proposed the higher-order approximation of the Caputo derivatives and further applied it in solving the fractional advection-diffusion equation. They used the Lagrange interpolation method to discretize time derivative and second-order central difference for the spatial derivatives. Li and Cai \cite{li2017high} considered a three steps process for the Caputo fractional derivative approximation, first two steps include the shifted Lubich formula derivation for infinite interval then for finite interval, and after that it is generalized to the Caputo derivative. Yadav et al.  \cite{yadav2019high} discussed Taylor expansion for the approximation of the generalized time-fractional derivative to solve generalized fractional advection-diffusion equation. Tian et al. \cite{tian2014polynomial} presented polynomial spectral collocation method for space fractional advection-diffusion equation. In \cite{zhuang2009numerical}, authors developed explicit and implicit Euler approximations to solve variable-order fractional advection-diffusion equation on finite domain. Singh et al. \cite{singhapproximation} investigated the numerical approximation of Caputo-Prabhakar derivative and then used this approximation to solve fractional advection-diffusion equation. \par
 Up to now, there Xu and Agrawal \cite{xu2013numerical} considered the FDM for approximation of the GCFD for the generalized fractional Burgers equation. In \cite{kumar2021numerical}, authors presented numerical scheme for the generalized fractional telegraph equation in time. Cao et al. \cite{cao2018finite} worked on the generalized time-fractional Kdv equation. Xu et al. \cite{Xu2013} considered the solution of generalized fractional diffusion equation. Owolabi and Pindza \cite{owolabi2022numerical}  used generalized Caputo-type fractional derivative for the numerical simulation of nonlinear chaotic maps. Odibat and Baleanu  \cite{odibat2020numerical} presented an adaptive predictor corrector algorithm to solve initial value problems with generalized Caputo fractional derivative.  The main focus of this paper is to present much higher order numerical scheme to approximate GCFD as compared to \cite{xu2014numerical} and \cite{yadav2019high}, and also to establish the error analysis in both time and space-discretization.  To the best of our knowledge, no work has been done yet for a third-order error bound of cubic interpolation formula to approximate the GCFD. \par
 The main contributions of this work are as follows: \\[1ex]
 (1) we extend the approximation method of Cao et al. \cite{cao2015high}  for approximating the GCFD and obtain the convergence order $(4-\alpha)$. Further, we show that the obtained scheme reduces to the approximation scheme discussed by Cao et al. \cite{cao2015high} for choice of the scale and the weight functions as $\zeta(t)=t$ and $\omega(t)=1.$ \\[1ex]
 (2) we establish the full error analysis of the presented higher-order numerical scheme for the generalized time fractional derivative by using Lagrange interpolation formula. \\[1ex]
 (3) we introduce some numerical results for different choices of scale and weight functions for the high-order time discretization scheme  with convergence order $\mathcal {O}(\tau^{4-\alpha})$ for all $\alpha \in (0,1)$ which acheives higher accuracy than the  numerical methods developed in Gao et al. \cite{gao2014new} for $\zeta(t) =t$ and $\omega(t)=1$.   \par  
 \par The remaining sections of the paper is arranged as follows: In Section \ref{Sec-3}, we discuss the $(4-\alpha)$-th order scheme to approximate the GCFD of order $\alpha$ of the function $U.$ In Section \ref{Sec-4}, a higher order difference scheme to solve generalized fractional advection-diffusion equation is presented. Stability and convergence analysis are also discussed in this section.  We present three numerical examples which illustrate the error and convergence order of our established numerical scheme in Section \ref{Sec-5}. Finally, Section \ref{Sec-6}, concludes some remarks.
\section{Numerical scheme for the generalized Caputo fractional derivative}\label{Sec-3}
Motivated by the research carried out in {\cite{cao2015high, kumar2021numerical}}, this section is devoted to presenting a high-order approximation formula for the generalized Caputo-type fractional derivative using cubic interpolation polynomials.  \\
Suppose that $U(t) \in C^{4}[0,T]$, and grid points $0=t_{0}<t_{1}<...<t_{N}=T$ with step length $\tau=t_{n}-t_{n-1}$ for $1 \leq n \leq N$. For simplicity, we use $g(s)=\omega(s)U(s),$ $U(t_{l})=U_{l}$ , $\omega(t_{l})=\omega_{l}$, and $\zeta(t_{l})=\zeta_{l}$. The generalized Caputo fractional derivative of order  $\alpha$ of the function $U(t)$ at grid point $t_{n}$ is given by,
\begin{align}\label{2.1} ^{C}_{0}\mathcal{D}_{t;[\zeta(t),\omega(t)]}^{\alpha}U_{n}
=\frac{[\omega_{n}]^{-1}}{\Gamma(1-\alpha)}\sum_{l=1}^{n}\int_{t_{l-1}}^{t_{l}}\frac{g'(s)}{[\zeta(t_{n})-\zeta(s)]^{\alpha}}~ds. 
\end{align}
On the first interval $[0,t_{1}]$ of domain, we use continuous linear polynomial $\Pi_{1}g(t)$ to approximate the  function $g(t)(=\omega(t)U(t)).$ Let $g(t_{l})=g_{l}$ and the difference operator $\nabla_{\tau}g_{l}=g_{l}-g_{l-1}$ for $l \geq 1.$ Then, we have 
\begin{align}
(\Pi_{1}g(t))'=\frac{(g_{1}-g_{0})}{\tau} = \frac{(\nabla_{\tau}g_{1})}{\tau}. \nonumber
\end{align}
Thus, Eq. \eqref{2.1} yields,
\begin{align}\label{3.3}
\frac{[\omega_{n}]^{-1}}{\Gamma(1-\alpha)}~\int_{0}^{t_{1}}[\zeta(t_{n})-\zeta(s)]^{-\alpha}~g'(s)~ds&= \frac{[\omega_{n}]^{-1}}{\Gamma(1-\alpha)}~\frac{(\nabla_{\tau}g_{1})}{\tau}\int_{0}^{t_{1}}[\zeta(t_{n})-\zeta(s)]^{-\alpha}ds+E_{\tau}^{1}&  \nonumber \\
&=a_{n-1}(\nabla_{\tau}g_{1})+E_{\tau}^{1},
\end{align}
where $E_{\tau}^{1}$ is the truncation error on first interval and coefficients for this approximation are
\begin{align}
a_{n-l}=\frac{[\omega_{n}]^{-1}}{\Gamma(2-\alpha)} \Bigg\{\frac{[\zeta(t_{n})-\zeta(t_{l-1})]^{1-\alpha}-[\zeta(t_{n})-\zeta(t_{l})]^{1-\alpha}}{[\zeta(t_{l})-\zeta(t_{l-1})]}\Bigg\}.  \nonumber
\end{align}
Here, we denote notation $\alpha_{0}^{(n)}=\frac{[\omega_{n}]^{-1}}{\Gamma(2-\alpha)},~ 1 \leq n \leq N.$ Then 
\begin{align}\label{3.4}
a_{n-l}=\alpha_{0}^{(n)}\Bigg\{\frac{[\zeta(t_{n})-\zeta(t_{l-1})]^{1-\alpha}-[\zeta(t_{n})-\zeta(t_{l})]^{1-\alpha}}{[\zeta(t_{l})-\zeta(t_{l-1})]}\Bigg\},~~~  1 \leq l \leq n. 
\end{align}
\begin{remark}
If the scale function $\zeta$ is a positive strictly increasing functions on the domain $[0,T]$, then following inequality holds
\begin{align}\label{3.5}
0<[\zeta(t_{n})-\zeta(t_{l})]^{1-\alpha}-[\zeta(t_{n})-\zeta(t_{l-1})]^{1-\alpha},~~~~1 \leq l \leq n.
\end{align}
Since, $t_{l} > t_{l-1},$ then it implies 
$\zeta(t_{l}) > \zeta(t_{l-1})$ for $1 \leq l \leq n,$ also $(1-\alpha)>0.$
\end{remark}
\begin{remark}
To estimate $a_{n-1}$, 
suppose that
$$ \Theta =[\zeta(t_{n})-\zeta(s)] \\
\Rightarrow  d\Theta  = - \zeta'(s) ~ ds \\
= -\bigg(\frac{\zeta(t_{l})-\zeta(t_{l-1})}{\tau}\bigg)~ds, ~~ s \in (t_{l-1},t_{l}), $$
therefore,
$$\int[\zeta(t_{n})-\zeta(s)]^{-\alpha}~ds=\frac{-\tau}{\zeta(t_{l})-\zeta(t_{l-1})}\frac{[\zeta(t_{n})-\zeta(s)]^{1-\alpha}}{(1-\alpha)}. $$
\end{remark} 
On the second interval $[t_{1},t_{2}]$, we use continuous quadratic polynomial $\Pi_{2}g(t)$ to approximate the function $g(t)$, then we get
\begin{align}
(\Pi_{2}g(t))'=\frac{(2t-t_{0}-t_{1})}{2\tau^{2}}g_{2}-\frac{(2t-t_{0}-t_{2})}{\tau^{2}}g_{1}+\frac{(2t-t_{1}-t_{2})}{2\tau^{2}}g_{0}. \nonumber
\end{align}
Thus from Eq. \eqref{2.1}, we get,
\begin{align}\label{2.5}
\frac{[\omega_{n}]^{-1}}{\Gamma(1-\alpha)}~\int_{t_{1}}^{t_{2}}[\zeta(t_{n})-\zeta(s)]^{-\alpha}~g'(s)~ds  &=\frac{[\omega_{n}]^{-1}}{\Gamma(1-\alpha)}~\int_{t_{1}}^{t_{2}}[\zeta(t_{n})-\zeta(s)]^{-\alpha}~(\Pi_{2}g(s))'~ds+E_{\tau}^{2}  \nonumber \\
&=a_{n-2}(g_{2}-g_{1})+b_{n-2}(g_{2}-2g_{1}+g_{0})+E_{\tau}^{2} \nonumber\\[1ex] 
&=a_{n-2}\big(\nabla_{\tau}g_{2}\big)+b_{n-2}\big(\nabla_{\tau}^{2}g_{2}\big)+E_{\tau}^{2}. 
\end{align}
Here, truncation error on second interval is $E_{\tau}^{2},$  and 
\begin{align}\label{3.8}
b_{n-l}=\alpha_{0}^{(n)}\Bigg\{&\frac{1}{(2-\alpha)}\bigg[\frac{[\zeta(t_{n})-\zeta(t_{l-1})]^{2-\alpha}-[\zeta(t_{n})-\zeta(t_{l})]^{2-\alpha}}{[\zeta(t_{l})-\zeta(t_{l-1})]^{2}}\bigg]\nonumber \\ &-\frac{1}{2}\bigg[\frac{[\zeta(t_{n})-\zeta(t_{l-1})]^{1-\alpha}+[\zeta(t_{n})-\zeta(t_{l})]^{1-\alpha}}{[\zeta(t_{l})-\zeta(t_{l-1})]}\bigg]\Bigg \},~~~~ 2 \leq l \leq n.
\end{align}
On the other subdomains $(l \geq 3)$, we use cubic interpolation polynomial $\Pi_{l}g(t)$ to approximate the function $g(t)$ using four points $(t_{l-3},g_{l-3}), (t_{l-2},g_{l-2}), (t_{l-1},g_{l-1}), (t_{l},g_{l})$.\\[0.5ex]
As we know cubic interpolation polynomial is defined as,
\begin{align}
\Pi_{l}g(t)=\sum_{r=0}^{3}g_{l-r}\prod_{s=0,{s \neq r}}^{3}\bigg(\frac{t-t_{l-s}}{t_{l-r}-t_{l-s}}\bigg), \nonumber
\end{align}
then we get,
\begin{align}
(\Pi_{l}g(t))'&=g_{l-3}\frac{(t-t_{l-2})(t_{l}+t_{l-1}-2t)+(t-t_{l-1})(t_{l}-t)}{6 \tau^{3}} \nonumber \\
&+g_{l-2}\frac{(t-t_{l-3})(2t-t_{l-1}-t_{l})+(t-t_{l})(t-t_{l-1})}{2\tau^{3}} \nonumber \\
&+g_{l-1}\frac{(t-t_{l-2})(t_{l-3}+t_{l}-2t)+(t-t_{l-3})(t_{l}-t)}{2 \tau^{3}} \nonumber \\
&+g_{l}\frac{(t-t_{l-2})(2t-t_{l-3}-t_{l-1})+(t-t_{l-1})(t-t_{l-3})}{6\tau^{3}}. \nonumber
\end{align}
Therefore, from Eq. \eqref{2.1}, we have,
\begin{align}\label{2.7}
&\frac{[\omega_{n}]^{-1}}{\Gamma(1-\alpha)}\sum_{l=3}^{n}\int_{t_{l-1}}^{t_{l}}[\zeta(t_{n})-\zeta(s)]^{-\alpha}g'(s)~ds  \nonumber \\ \nonumber  &= \frac{[\omega_{n}]^{-1}}{\Gamma(1-\alpha)}\sum_{l=3}^{n}\int_{t_{l-1}}^{t_{l}}[\zeta(t_{n})-\zeta(s)]^{-\alpha}(\Pi_{l}g(s))'~ds+E_{\tau}^{n}  \nonumber \\
&=\sum_{l=3}^{n}\bigg[ A_{1,n-l}~g_{l}+A_{2,n-l}~g_{l-1}  +A_{3,n-l}~g_{l-2}+A_{4,n-l}~g_{l-3}\bigg]+E_{\tau}^{n}, 
\end{align}
where $E_{\tau}^{n}~(3 \leq n \leq N)$ is the truncation error, and
\begin{align}
A_{1,n-l}~=~&\alpha_{0}^{(n)}\Bigg\{\frac{1}{6}\bigg[\frac{2[\zeta(t_{n})-\zeta(t_{l-1})]^{1-\alpha}-11[\zeta(t_{n})-\zeta(t_{l})]^{1-\alpha}}{[\zeta(t_{l})-\zeta(t_{l-1})]}\bigg] \nonumber \\
&+\frac{1}{(2-\alpha)}\bigg[\frac{[\zeta(t_{n})-\zeta(t_{l-1})]^{2-\alpha}-2[\zeta(t_{n})-\zeta(t_{l})]^{2-\alpha}}{[\zeta(t_{l})-\zeta(t_{l-1})]^{2}}\bigg] \nonumber \\
&+\frac{1}{(2-\alpha)(3-\alpha)}\bigg[\frac{[\zeta(t_{n})-\zeta(t_{l-1})]^{3-\alpha}-[\zeta(t_{n})-\zeta(t_{l})]^{3-\alpha}}{[\zeta(t_{l})-\zeta(t_{l-1})]^{3}}\bigg]\Bigg \}, 
\end{align}
\begin{align}
A_{2,n-l}~=~&\alpha_{0}^{(n)}\Bigg\{\frac{1}{2}\bigg[\frac{6[\zeta(t_{n})-\zeta(t_{l})]^{1-\alpha}+[\zeta(t_{n})-\zeta(t_{l-1})]^{1-\alpha}}{[\zeta(t_{l})-\zeta(t_{l-1})]}\bigg] \nonumber \\
&+\frac{1}{(2-\alpha)}\bigg[\frac{5[\zeta(t_{n})-\zeta(t_{l})]^{2-\alpha}-2[\zeta(t_{n})-\zeta(t_{l-1})]^{2-\alpha}}{[\zeta(t_{l})-\zeta(t_{l-1})]^{2}}\bigg] \nonumber \\
&+\frac{3}{(2-\alpha)(3-\alpha)}\bigg[\frac{[\zeta(t_{n})-\zeta(t_{l})]^{3-\alpha}-[\zeta(t_{n})-\zeta(t_{l-1})]^{3-\alpha}}{[\zeta(t_{l})-\zeta(t_{l-1})]^{3}}\bigg]\Bigg\}, 
\end{align}
\begin{align}
A_{3,n-l}~=~&\alpha_{0}^{(n)}\Bigg\{-\frac{1}{2}\bigg[\frac{2[\zeta(t_{n})-\zeta(t_{l-1})]^{1-\alpha}+3[\zeta(t_{n})-\zeta(t_{l})]^{1-\alpha}}{[\zeta(t_{l})-\zeta(t_{l-1})]}\bigg] \nonumber \\
&+\frac{1}{(2-\alpha)}\bigg[\frac{[\zeta(t_{n})-\zeta(t_{l-1})]^{2-\alpha}-4[\zeta(t_{n})-\zeta(t_{l})]^{2-\alpha}}{[\zeta(t_{l})-\zeta(t_{l-1})]^{2}}\bigg] \nonumber \\
&+\frac{3}{(2-\alpha)(3-\alpha)}\bigg[\frac{[\zeta(t_{n})-\zeta(t_{l-1})]^{3-\alpha}-[\zeta(t_{n})-\zeta(t_{l})]^{3-\alpha}}{[\zeta(t_{l})-\zeta(t_{l-1})]^{3}}\bigg]\Bigg\},  
\end{align}
\begin{align}
A_{4,n-l}~=~&\alpha_{0}^{(n)}\Bigg\{\frac{1}{6}\bigg[\frac{[\zeta(t_{n})-\zeta(t_{l-1})]^{1-\alpha}+2[\zeta(t_{n})-\zeta(t_{l})]^{1-\alpha}}{[\zeta(t_{l})-\zeta(t_{l-1})]}\bigg] \nonumber \\
&+\frac{1}{(2-\alpha)}\bigg[\frac{[\zeta(t_{n})-\zeta(t_{l})]^{2-\alpha}}{[\zeta(t_{l})-\zeta(t_{l-1})]^{2}}\bigg] \nonumber \\
&+\frac{1}{(2-\alpha)(3-\alpha)}\bigg[\frac{[\zeta(t_{n})-\zeta(t_{l})]^{3-\alpha}-[\zeta(t_{n})-\zeta(t_{l-1})]^{3-\alpha}}{[\zeta(t_{l})-\zeta(t_{l-1})]^{3}}\bigg]\Bigg\}, 
\end{align}
where, $3 \leq l \leq n $.
After simplifying the Eq. \eqref{2.7}, we obtain the following form 
\begin{align}\label{2.12}
&\frac{[\omega_{n}]^{-1}}{\Gamma(1-\alpha)}\sum_{l=3}^{n}\int_{t_{l-1}}^{t_{l}}[\zeta(t_{n})-\zeta(s)]^{-\alpha}~g'(s)~ds \nonumber \\ &=\sum_{l=3}^{n}\big[ a_{n-l}(g_{l}-g_{l-1})+b_{n-l}(g_{l}-2g_{l-1} +g_{l-2})\nonumber \\[1ex] &+c_{n-l}(g_{l}-3g_{l-1}+3g_{l-2}-g_{l-3})\big]+E_{\tau}^{n} \nonumber \\[1.5ex] 
&=\sum_{l=3}^{n}\big[a_{n-l}(\nabla_{\tau}g_{l})+b_{n-l}(\nabla_{\tau}^{2}g_{l})+c_{n-l}(\nabla_{\tau}^{3}g_{l})\big]+E_{\tau}^{n}.
\end{align}
Here, we introduce another coefficient $c_{n-l},$ which is defined as 
\begin{align}
c_{n-l}=\alpha_{0}^{(n)}\Bigg\{&\frac{1}{(2-\alpha)(3-\alpha)}\bigg[\frac{[\zeta(t_{n})-\zeta(t_{l-1})]^{3-\alpha}-[\zeta(t_{n})-\zeta(t_{l})]^{3-\alpha}}{[\zeta(t_{l})-\zeta(t_{l-1})]^{3}}\bigg] \nonumber \\ \nonumber&-\frac{1}{(2-\alpha)}\bigg[\frac{[\zeta(t_{n})-\zeta(t_{l})]^{2-\alpha}}{[\zeta(t_{l})-\zeta(t_{l-1})]^{2}}\bigg]\\  &-\frac{1}{6}\bigg[\frac{[\zeta(t_{n})-\zeta(t_{l-1})]^{1-\alpha}+2[\zeta(t_{n})-\zeta(t_{l})]^{1-\alpha}}{[\zeta(t_{l})-\zeta(t_{l-1})]}\bigg]\Bigg\},~~~ 3 \leq l \leq n,
\end{align}  
and coefficients $a_{n-l},$ $b_{n-l}$ are defined in Eqs. \eqref{3.4}, \eqref{3.8}, respectively and Eq. \eqref{2.12} gives more compact form of Eq. \eqref{2.7}. Such forms of coefficients were missing in \cite{cao2015high}. From this we can easily discuss properties of coefficients.     \\[1ex]
Motivated by \cite{cao2015high} (developed for Caputo derivative), a new numerical scheme for the generalized Caputo-type fractional derivative of order $\alpha$ of the function $U(t)$ at grid point $t_{n}$ with the help of the equations \eqref{3.3}, \eqref{2.5}, \eqref{2.7}, is defined by
\begin{align}\label{ab}
^\mathcal{H}\mathcal{D}_{t;[\zeta(t),\omega(t)]}^{\alpha}U(t)\big|_{t=t_{n}}&=\frac{[\omega_{n}]^{-1}}{\Gamma(1-\alpha)}\int_{0}^{t_{n}}\frac{g'(s)}{[\zeta(t_{n})-\zeta(s)]^{\alpha}}~ds \nonumber \\[1ex]
&=\frac{[\omega_{n}]^{-1}}{\Gamma(1-\alpha)}\Bigg[\int_{0}^{t_{1}}\frac{g'(s)}{[\zeta(t_{n})-\zeta(s)]^{\alpha}}~ds+\int_{t_{1}}^{t_{2}}\frac{g'(s)}{[\zeta(t_{n})-\zeta(s)]^{\alpha}}~ds\nonumber \\[1ex] &+\sum_{l=3}^{n}\int_{l-1}^{t_{l}}\frac{g'(s)}{[\zeta(t_{n})-\zeta(s)]^{\alpha}}~ds\Bigg] \nonumber \\[1ex]
&= \sum_{l=0}^{n} \lambda_{l}~\omega_{n-l}~U_{n-l},
\end{align}
with $g(s)=\omega(s)U(s)$.
\begin{Lemma}
	For any $\alpha \in (0,1)$ and $U(t) \in \mathcal{C}^{4}[0,T], $ then 
	\begin{align}
	^{C}_{0}\mathcal{D}_{t;[\zeta(t),\omega(t)]}^{\alpha}U_{n}=^\mathcal{H}\mathcal{D}_{t;[\zeta(t),\omega(t)]}^{\alpha}U_{n}+E_{\tau}^{n},~ n=1,2 ,...,N,\nonumber 
	\end{align}
	where, $^\mathcal{H}\mathcal{D}_{t;[\zeta(t),\omega(t)]}^{\alpha}$ is the approximation of GCFD and  $|E_{\tau}^{n}|=\mathcal{O}(\tau^{4-\alpha}).$
\end{Lemma}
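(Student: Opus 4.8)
The plan is to bound the exact truncation error, which by construction of the scheme is the sum over subintervals of the kernel times the interpolation error of the derivative,
\[
E_\tau^n = \frac{[\omega_n]^{-1}}{\Gamma(1-\alpha)}\sum_{l=1}^n \int_{t_{l-1}}^{t_l} K(s)\,\bigl(g'(s)-(\Pi_l g)'(s)\bigr)\,ds,
\qquad K(s):=[\zeta(t_n)-\zeta(s)]^{-\alpha},
\]
estimating each subinterval contribution $I_l$ separately and matching the three cases used to build the scheme (linear on $[0,t_1]$, quadratic on $[t_1,t_2]$, cubic on $[t_{l-1},t_l]$ for $l\ge 3$). Since $[\omega_n]^{-1}/\Gamma(1-\alpha)$ and $\zeta',1/\zeta'$ are bounded on $[0,T]$ and $g=\omega U\in C^4[0,T]$, all constants below depend only on these bounds and on $\|g^{(4)}\|_\infty$.

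The central device is integration by parts on each subinterval. Since the Lagrange interpolant $\Pi_l g$ matches $g$ at both endpoints $t_{l-1},t_l$, the boundary terms vanish and $I_l=-\int_{t_{l-1}}^{t_l}K'(s)\,(g(s)-\Pi_l g(s))\,ds$. This trades the $O(\tau^{3})$ derivative-interpolation error for the $O(\tau^{4})$ function error $|g-\Pi_l g|\le C\tau^{4}$, read off the cubic remainder $\tfrac{g^{(4)}(\eta_s)}{24}\prod_{r=0}^3(s-t_{l-r})$, at the price of differentiating the kernel once; note that only $K$ is differentiated, so the $s$-dependence of the mean-value point $\eta_s$ is harmless. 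For the regular cubic intervals $3\le l\le n-1$ the kernel is increasing with $K'>0$, hence $\int_{t_{l-1}}^{t_l}|K'|\,ds=K(t_l)-K(t_{l-1})$ and the bound $|I_l|\le C\tau^4(K(t_l)-K(t_{l-1}))$ telescopes to $\sum_{l=3}^{n-1}|I_l|\le C\tau^4\,K(t_{n-1})=C\tau^4[\zeta(t_n)-\zeta(t_{n-1})]^{-\alpha}$. Because $\zeta(t_n)-\zeta(t_{n-1})\ge c\tau$, this is $O(\tau^{4-\alpha})$.

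The singular last interval $l=n$ is handled directly rather than by parts: the uniform cubic derivative error is $O(\tau^3)$ while $\int_{t_{n-1}}^{t_n}K(s)\,ds\le C\int_{t_{n-1}}^{t_n}(t_n-s)^{-\alpha}\,ds=O(\tau^{1-\alpha})$, so $|I_n|=O(\tau^{4-\alpha})$. The remaining difficulty is the first two intervals, where only linear/quadratic interpolation is used, so generically $|g-\Pi_1 g|=O(\tau^2)$ and $|g-\Pi_2 g|=O(\tau^3)$ — too crude to reach order $4-\alpha$. This is exactly where the hypotheses $U'(t_0)=U''(t_0)=U'''(t_0)=0$ enter: together with smoothness of $\omega$ (in the reduction $\omega\equiv1$ they coincide with $g'',g'''$ vanishing at $t_0$, which is what the estimate actually uses) they make $g$ flat to third order at the origin, so Taylor expansion gives $g''(s)=O(\tau^2)$ on $[0,t_1]$ and $g'''(s)=O(\tau)$ on $[t_1,t_2]$, lifting both remainders to $O(\tau^4)$ there. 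Applying the same integration-by-parts estimate (the boundary terms again vanish, since $\Pi_1,\Pi_2$ interpolate $g$ at the interval endpoints) yields $O(\tau^{4-\alpha})$ from these intervals as well when $n\ge 3$; the cases $n=1,2$ are checked directly, as they involve only $O(1)$ short intervals against an integrable singular kernel. Summing all contributions gives $|E_\tau^n|=O(\tau^{4-\alpha})$.

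I expect the main obstacle to be precisely this sharpening from the naive $O(\tau^3)$ bound to $O(\tau^{4-\alpha})$. It rests on three points that must be combined carefully: (i) the endpoint matching of each interpolant, which kills the boundary terms and permits the integration by parts that converts the derivative error into the one-order-higher function error; (ii) isolating the singular interval $l=n$ from the regular ones, so that $K'$ remains integrable where it is used and the singular kernel is absorbed only through $\int K\,ds=O(\tau^{1-\alpha})$; and (iii) propagating the origin-flatness hypotheses to rescue the low-order initial intervals. The bookkeeping of how $\zeta',1/\zeta'$ enter the telescoping constant, and the verification of the small-$n$ base cases, are routine once this structure is in place.
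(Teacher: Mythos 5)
Your proposal is correct and rests on the same skeleton as the paper's own argument (the proof of its truncation-error theorem): integrate by parts on the error, using that each interpolant matches $g$ at the endpoints of its subinterval so the boundary terms vanish, thereby trading the $O(\tau^3)$ derivative-interpolation error for the $O(\tau^4)$ function error against the differentiated kernel; bound the regular cubic intervals collectively (your telescoping $\sum_{l=3}^{n-1}\bigl(K(t_l)-K(t_{l-1})\bigr)\le K(t_{n-1})$ is exactly the paper's $\int_{\zeta_2}^{\zeta_{n-1}}[\zeta_n-v]^{-\alpha-1}\,dv\le \alpha^{-1}(\zeta_n-\zeta_{n-1})^{-\alpha}$); and isolate the singular last interval. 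Three differences are worth recording. (i) You work in the original time variable with $K(s)=[\zeta(t_n)-\zeta(s)]^{-\alpha}$ and bounds on $\zeta',1/\zeta'$; the paper substitutes $v=\zeta(t)$ and invokes the Lipschitz bound $|\zeta_l-\zeta_{l-1}|\le L\tau$ — cosmetic. (ii) On the last interval the paper stays in the by-parts form and absorbs one power of the singularity using the factor $(v-\zeta_n)$ in the cubic remainder, while you bound $(g-\Pi_n g)'=O(\tau^3)$ directly against $\int K=O(\tau^{1-\alpha})$ — equivalent. (iii) The genuine divergence is the initial layer: the paper lumps $[\zeta_0,\zeta_2]$ together with the quadratic remainder (even though the scheme uses the linear interpolant on $[0,t_1]$, a mismatch your subinterval-faithful decomposition avoids) and bounds that piece by $C\tau^4(t_n-t_2)^{-\alpha-1}$, which degrades to $O(\tau^{3-\alpha})$ at $n=3$, while for $n=1,2$ it only cites the $O(\tau^{2-\alpha})$ and $O(\tau^{3-\alpha})$ local bounds from earlier work; so the paper's own estimates do not actually deliver the uniform $O(\tau^{4-\alpha})$ the lemma asserts. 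You instead invoke the flatness hypotheses $U'(t_0)=U''(t_0)=U'''(t_0)=0$ (stated only in the paper's introduction and never used in its proof) to lift the first two interpolation remainders to $O(\tau^4)$, which is the more honest route to the claimed rate — with the caveat you correctly flag yourself: for nonconstant $\omega$ these conditions do not make $g=\omega U$ flat at the origin (e.g. $g''(0)=\omega''(0)U(0)$), so the rescue genuinely requires $\omega\equiv 1$ (as in the paper's PDE reduction) or flatness of $g$ itself. In short: same method, minor technical variants, and a treatment of the first two intervals that is stronger and more consistent with the lemma's statement than the paper's.
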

For distinct value of $n$, the coefficients in \eqref{ab} can be expressed as below,\\
For $n=1$, 
\begin{center}
	$\begin{cases}
	\lambda_{0}=a_{0},\\
	\lambda_{1}=-a_{0}. 
	\end{cases}$
\end{center} 
For $n=2$, 
\begin{center}
	$\begin{cases}
	\lambda_{0}=a_{0}+b_{0},\\
	\lambda_{1}=a_{1}-a_{0}-2b_{0},\\
	\lambda_{2}=-a_{1}+b_{0}.
	\end{cases}$
\end{center}
For $n=3$, 
\begin{center}
	$\begin{cases}
	\lambda_{0}=A_{1,0},\\
	\lambda_{1}=A_{2,0}+a_{1}+b_{1},\\
	\lambda_{2}=A_{3,0}+a_{2}-a_{1}-2b_{1},\\
	\lambda_{3}=A_{4,0}-a_{2}+b_{1}.
	\end{cases}$
\end{center}
For $n=4$,
\begin{center}
	$\begin{cases}
	\lambda_{0}=A_{1,0},\\
	\lambda_{1}=A_{1,1}+A_{2,0},\\
	\lambda_{2}=A_{2,1}+A_{3,0}+a_{2}+b_{2},\\
	\lambda_{3}=A_{3,1}+A_{4,0}+a_{3}-a_{2}-2b_{2},\\
	\lambda_{4}=A_{4,1}-a_{3}+b_{2}.
	\end{cases}$
\end{center} 
For $n=5$,
\begin{center}
	$\begin{cases}
	\lambda_{0}=A_{1,0},\\
	\lambda_{1}=A_{1,1}+A_{2,0},\\
	\lambda_{2}=A_{1,2}+A_{2,1}+A_{3,0},\\
	\lambda_{3}=A_{2,2}+A_{3,1}+A_{4,0}+a_{3}+b_{3},\\
	\lambda_{4}=A_{3,2}+A_{4,1}+a_{4}-a_{3}-2b_{3},\\
	\lambda_{5}=A_{4,2}-a_{4}+b_{3}.
	\end{cases}$
\end{center}
 For $n \geq 6$,
\begin{align}\label{3.17}
\begin{cases}
\lambda_{0}=A_{1,0},\\
\lambda_{1}=A_{1,1}+A_{2,0},\\
\lambda_{2}=A_{1,2}+A_{2,1}+A_{3,0},\\
\lambda_{l}=A_{1,l}+A_{2,l-1}+A_{3,l-2}+A_{4,l-3}~~~(3 \leq l \leq n-3), \\
\lambda_{n-2}=a_{n-2}+b_{n-2}+A_{2,n-3}+A_{3,n-4}+A_{4,n-5},\\
\lambda_{n-1}=a_{n-1}-a_{n-2}-2b_{n-2}+A_{3,n-3}+A_{4,n-4},\\
\lambda_{n}=-a_{n-1}+b_{n-2}+A_{4,n-3}.
\end{cases}
\end{align}
\begin{Lemma}
	If the scale function $\zeta$ fulfills \eqref{3.5}, and the weight function $\omega$ is non-negative and non-decreasing on uniform time grids, then\\
	(i)\cite{cao2018finite} Linear approximation coefficient satisfies, $1 \leq l \leq n,$
	\begin{align}
	0<a_{n-1}<...<a_{n-l}<a_{n-l-1}<...<a_{1}<a_{0}. \nonumber 
	\end{align}  	
	(ii) Quadratic approximation coefficient satisfies, $2 \leq l \leq n,$ 
	\begin{align}
	0<b_{n-2}<...<b_{n-l}<b_{n-l-1}<...<b_{1}<b_{0}. \nonumber
	\end{align}
\end{Lemma}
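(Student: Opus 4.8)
The plan is to pass from the time variable to the scale variable and read both coefficient families off as (weighted) integrals of the single kernel $\chi(v):=[\zeta(t_n)-v]^{-\alpha}$. Substituting $v=\zeta(s)$ on each subinterval, with the piecewise‑constant Jacobian $\zeta'(s)=\Delta_l/\tau$ used to obtain \eqref{3.4} (here $\Delta_l:=\zeta(t_l)-\zeta(t_{l-1})>0$), the closed forms \eqref{3.4} and \eqref{3.8} turn into
\[
a_{n-l}=\frac{[\omega_n]^{-1}}{\Gamma(1-\alpha)}\,\frac{1}{\Delta_l}\int_{\zeta(t_{l-1})}^{\zeta(t_l)}\chi(v)\,dv,
\]
\[
b_{n-l}=\frac{[\omega_n]^{-1}}{\Gamma(1-\alpha)}\,\frac{1}{2\Delta_l^{2}}\int_{\zeta(t_{l-1})}^{\zeta(t_l)}\chi(v)\,\bigl(2v-\zeta(t_{l-1})-\zeta(t_l)\bigr)\,dv .
\]
Because $\omega$ is non‑negative (so $\omega_n>0$) the prefactor $[\omega_n]^{-1}/\Gamma(1-\alpha)$ is a fixed positive constant, common to every $a_{n-l}$ and every $b_{n-l}$ at a given $n$; hence the weight enters only through this positive scaling, and the whole ordering is governed by the integrals. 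Since $\zeta$ satisfies \eqref{3.5} and $0<\alpha<1$, the kernel $\chi$ is positive, strictly increasing and strictly convex on $[0,\zeta(t_n))$, with $\chi'(v)=\alpha[\zeta(t_n)-v]^{-\alpha-1}>0$. Positivity of $a_{n-l}$ is then immediate from the first representation.

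For $b_{n-l}$ the integrand changes sign, so I would integrate by parts using $(2v-\zeta(t_{l-1})-\zeta(t_l))\,dv=d\bigl[(v-\zeta(t_{l-1}))(v-\zeta(t_l))\bigr]$. The boundary terms vanish at the two nodes, leaving
\[
b_{n-l}=\frac{[\omega_n]^{-1}}{\Gamma(1-\alpha)}\,\frac{1}{2\Delta_l^{2}}\int_{\zeta(t_{l-1})}^{\zeta(t_l)}\chi'(v)\,(\zeta(t_l)-v)\,(v-\zeta(t_{l-1}))\,dv ,
\]
whose integrand is non‑negative since $\chi'>0$ and $(\zeta(t_l)-v)(v-\zeta(t_{l-1}))\ge 0$ on the interval; thus $b_{n-l}>0$, giving the positivity in (ii).

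The monotonicity in (i) is the clean part. The intervals $[\zeta(t_{l-1}),\zeta(t_l)]$ are consecutive and shift toward $\zeta(t_n)$ as $l$ grows. Reading $a_{n-l}$ as the $\chi$‑average over such an interval and using that $\chi$ is increasing, the average over $[\zeta(t_l),\zeta(t_{l+1})]$ is at least $\chi(\zeta(t_l))$, which is in turn at least the average over $[\zeta(t_{l-1}),\zeta(t_l)]$; strict monotonicity of $\chi$ makes each inequality strict. This adjacency argument requires no control on the interval widths and yields $a_{n-1}<a_{n-2}<\dots<a_0$, recovering the result cited from Cao et al.\ \cite{cao2015high}.

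The main obstacle is the monotonicity of $b$. Symmetrizing about the midpoint $\mu_l$ of $[\zeta(t_{l-1}),\zeta(t_l)]$, the second representation becomes $b_{n-l}=\frac{[\omega_n]^{-1}}{\Gamma(1-\alpha)}\int_0^{1/2}\bigl[\chi(\mu_l+\Delta_l y)-\chi(\mu_l-\Delta_l y)\bigr]\,y\,dy$, so (ii) reduces to showing that the symmetric difference $D_l(y):=\chi(\mu_l+\Delta_l y)-\chi(\mu_l-\Delta_l y)$ is nondecreasing in $l$ for each $y\in[0,\tfrac12]$. As $\chi$ is increasing and convex, $D_l(y)$ grows with both the center $\mu_l$ and the half‑width $\Delta_l y$; when the scale increments $\Delta_l$ are nondecreasing — in particular for $\zeta(t)=t$ (the Caputo case) and for any convex scale — both effects reinforce and the chain $b_{n-2}<\dots<b_0$ follows immediately. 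The genuinely delicate regime is a concave scale, where $\Delta_l$ \emph{decreases} while $\mu_l$ still increases: one must then show that the rightward shift of the support, and hence the blow‑up of $\chi'$ as $v\to\zeta(t_n)$, dominates the loss from the narrowing interval. Making this quantitative is the crux; the natural sufficient condition is the convexity of $t\mapsto[\zeta(t_n)-\zeta(t)]^{-\alpha}$, equivalently $(\alpha+1)(\zeta')^{2}+[\zeta(t_n)-\zeta]\,\zeta''\ge 0$, which pins down the competition between the shrinking $\Delta_l$ and the growing kernel and closes the argument.
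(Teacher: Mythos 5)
Your proposal is correct where it is complete, and it takes a genuinely different route from the paper's. The paper works in the time variable and argues via the mean value theorem: for (i) it writes $\frac{1}{\tau}\int_{t_{l-1}}^{t_l}[\zeta(t_n)-\zeta(s)]^{-\alpha}\,ds=[\zeta(t_n)-\zeta(\hat{x}_l)]^{-\alpha}$ and invokes monotonicity of the kernel (the same adjacency idea you use, just phrased through an intermediate point $\hat{x}_l$ rather than through averages); for (ii) it computes the trapezoid-rule defect of $\eta(s)=[\zeta(t_n)-\zeta(s)]^{1-\alpha}$ by two further applications of the MVT, arriving at the positive quantity in \eqref{2.20}, and then disposes of the whole chain $b_{n-2}<\dots<b_0$ with the single sentence that $[\zeta(t_n)-\zeta(s)]^{-\alpha-1}$ is monotone increasing. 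Your version instead passes to the scale variable and reads $a_{n-l}$ and $b_{n-l}$ as exact integrals of $\chi(v)=[\zeta(t_n)-v]^{-\alpha}$; both representations do reproduce \eqref{3.4} and \eqref{3.8} (I checked the $b$ formula numerically against \eqref{3.8}), and your integration by parts against $d[(v-\zeta(t_{l-1}))(v-\zeta(t_l))]$ is a clean, MVT-free equivalent of the paper's positivity computation. What your route buys is precision at exactly the point the paper glosses over: your symmetrized formula $b_{n-l}=C\int_0^{1/2}[\chi(\mu_l+\Delta_l y)-\chi(\mu_l-\Delta_l y)]\,y\,dy$ makes visible that the $b$-chain depends jointly on the drifting centers $\mu_l$ and the varying increments $\Delta_l$, whereas the paper's one-line conclusion silently discards the factor $[\zeta(t_l)-\zeta(t_{l-1})]^2$ and the uncontrolled MVT points $\theta,\nu_l$ in \eqref{2.20}. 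So your honest flagging of the concave-scale regime (decreasing $\Delta_l$ on a uniform time grid), together with the sufficient conditions you supply — nondecreasing $\Delta_l$, e.g.\ $\zeta$ convex or $\zeta(t)=t$, or convexity of $s\mapsto[\zeta(t_n)-\zeta(s)]^{-\alpha}$, i.e.\ $(\alpha+1)(\zeta')^2+[\zeta(t_n)-\zeta]\zeta''\ge 0$ — is not a defect relative to the paper: it makes explicit a hypothesis that the paper's own proof of (ii) needs but never states, and under \eqref{3.5} alone the paper's argument for the $b$-monotonicity is incomplete in precisely the way you describe. Two minor slips on your side: part (i) is cited in the statement to \cite{cao2018finite}, not \cite{cao2015high}; and ``non-negative'' weight should really be ``positive'' for $[\omega_n]^{-1}$ to be meaningful, though the paper shares that looseness.
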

\begin{proof}(i) If scale function $\zeta$ is continuous on respective domain then by using mean-value theorem, there exist $\hat{x}_{l} \in (t_{l-1},t_{l})$ such that
	\begin{align}
	\frac{1}{\tau}\int_{t_{l-1}}^{t_{l}}[\zeta(t_{n})-\zeta(s)]^{-\alpha}ds = [\zeta(t_{n})-\zeta(\hat{x}_{l})]^{-\alpha},~~~~ 1 \leq l \leq n.
	\end{align}
	Since, $[\zeta(t_{n})-\zeta(s)]^{-\alpha}$ is a monotone increasing function, we easily get our required result.\\[1.5ex]
	(ii) Let $\eta(s)=[\zeta(t_{n})-\zeta(s)]^{1-\alpha}$, suppose scale function $\zeta$ is sufficiently smooth on domain $[0,T]$ then mean-value theorem yields 
	\begin{align}\label{2.20}
	2 \int_{t_{l-1}}^{t_{l}}\eta(s)ds-\big(\eta(t_{l-1})+\eta(t_{l})\big)&=2\eta(\tilde{x}_{l})-\big(\eta(t_{l-1})+\eta(t_{l})\big), ~~~\tilde{x_{l}} \in (t_{l-1},t_{l}), \nonumber \\
	&= -\theta \big(\eta'(t_{l})-\eta'(t_{l-1})\big), \qquad \qquad 0< \theta <1 \nonumber \\ \nonumber
	&= -\theta \tau \eta''(\nu_{l}), ~~~~~~~~~~~~~~~~~~~~~~~~ \nu_{l} \in (t_{l-1},t_{l}),
	\nonumber \\
	&= \frac{\theta \alpha (1-\alpha)}{\tau}[\zeta(t_{l})-\zeta(t_{l-1})]^{2}[\zeta(t_{n})-\zeta(\nu_{l})]^{-\alpha-1}>0. 
	\end{align}
	Using \eqref{2.20}, we can easily get that $b_{n-l}>0$ for positive strictly increasing weight function $\omega(t)$.
	Since, $[\zeta(t_{n})-\zeta(s)]^{-\alpha-1}$ is a monotone increasing function on temporal domain $[0,T]$, so we get the desired result.
\end{proof}
\begin{Lemma}
	Suppose that the scale function $\zeta$ is positive and strictly increasing, and the weight function $\omega$ is non-negative and non-decreasing, then for each $\alpha \in (0,1)$, the following conditions hold for coefficients in \eqref{3.17}\\
	(1) $\lambda_{0}>0$,~~ $\forall~ n \geq 1$,\\
	(2) $\sum_{l=0}^{n}\lambda_{l}=0$.
\end{Lemma}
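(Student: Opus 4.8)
The plan is to treat the two assertions separately, exploiting that the weights $\lambda_{l}$ in \eqref{ab} are fixed coefficients of a linear functional of the nodal data $g_{l}=\omega_{l}U_{l}$: they are produced by integrating $[\zeta(t_{n})-\zeta(s)]^{-\alpha}$ against the derivative of the piecewise interpolant of $g$, and do not themselves depend on the values $g_{l}$.

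For part (2) I would invoke that the construction is exact on constants. Take $g\equiv 1$, i.e. $g_{0}=g_{1}=\cdots=g_{n}=1$. On each subinterval the interpolant $\Pi_{1}g$, $\Pi_{2}g$, $\Pi_{l}g$ is then the same constant, so every $(\Pi_{k}g)'\equiv 0$ and the whole right-hand-side integral in the defining identity \eqref{ab} vanishes; the left-hand side collapses to $\sum_{l=0}^{n}\lambda_{l}\cdot 1$. Hence $\sum_{l=0}^{n}\lambda_{l}=0$. This conceptual route avoids any term-by-term telescoping of the piecewise formulas in \eqref{3.17}, which is the brute-force but error-prone alternative.

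For part (1) I would argue by cases on $n$ using the explicit coefficient lists. For $n=1$ we have $\lambda_{0}=a_{0}$ and for $n=2$ we have $\lambda_{0}=a_{0}+b_{0}$, both positive by the preceding Lemma (parts (i)--(ii)) which gives $a_{0}>0$ and $b_{0}>0$; one may also verify this directly by setting $l=n$ in \eqref{3.4} and \eqref{3.8}, obtaining $a_{0}=\alpha_{0}^{(n)}[\zeta(t_{n})-\zeta(t_{n-1})]^{-\alpha}$ and $b_{0}=\alpha_{0}^{(n)}[\zeta(t_{n})-\zeta(t_{n-1})]^{-\alpha}\big(\tfrac{1}{2-\alpha}-\tfrac12\big)=\alpha_{0}^{(n)}[\zeta(t_{n})-\zeta(t_{n-1})]^{-\alpha}\tfrac{\alpha}{2(2-\alpha)}>0$. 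For every $n\geq 3$ the lists give $\lambda_{0}=A_{1,0}$, so the crux is to show $A_{1,0}>0$. The key computation is to specialize the long expression for $A_{1,n-l}$ to $l=n$: then $\zeta(t_{n})-\zeta(t_{l})=\zeta(t_{n})-\zeta(t_{n})=0$, so every power $[\zeta(t_{n})-\zeta(t_{l})]^{k-\alpha}$ ($k=1,2,3$) drops out, and with $\delta:=\zeta(t_{n})-\zeta(t_{n-1})>0$ the three bracketed terms reduce to $\tfrac13\delta^{-\alpha}$, $\tfrac{1}{2-\alpha}\delta^{-\alpha}$ and $\tfrac{1}{(2-\alpha)(3-\alpha)}\delta^{-\alpha}$, giving
\begin{align}
A_{1,0}=\alpha_{0}^{(n)}\,\delta^{-\alpha}\left(\frac13+\frac{1}{2-\alpha}+\frac{1}{(2-\alpha)(3-\alpha)}\right). \nonumber
\end{align}
Since $\alpha_{0}^{(n)}=[\omega_{n}]^{-1}/\Gamma(2-\alpha)>0$ (using that the weight is strictly positive at $t_{n}$), $\delta^{-\alpha}>0$, and each summand in the parenthesis is positive for $\alpha\in(0,1)$, we conclude $A_{1,0}>0$, which closes part (1).

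The main obstacle I anticipate is purely bookkeeping: correctly collapsing the cumbersome formula for $A_{1,n-l}$ at $l=n$ and confirming the cancellation of the vanishing powers, while keeping track that positivity of $\alpha_{0}^{(n)}$ genuinely requires $\omega_{n}>0$ rather than mere non-negativity. Once the reproduction-of-constants identity is used for (2), the only real content is this single elementary but careful evaluation of $A_{1,0}$, with the $n=1,2$ cases handled by the already-established positivity of $a_{0}$ and $b_{0}$.
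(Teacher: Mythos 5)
Your proposal is correct. For part (1) you follow essentially the same route as the paper: the cases $n=1$ and $n=2$ via positivity of $a_{0}$ and $b_{0}$ (your closed form $b_{0}=\alpha_{0}^{(n)}[\zeta_{n}-\zeta_{n-1}]^{-\alpha}\tfrac{\alpha}{2(2-\alpha)}$ agrees with the paper's $\bigl(\tfrac{1}{2-\alpha}-\tfrac{1}{2}\bigr)$ factor), and for $n\geq 3$ the specialization $l=n$ in $A_{1,n-l}$, which collapses to exactly the expression $\alpha_{0}^{(n)}\bigl(\tfrac{1}{3}+\tfrac{1}{2-\alpha}+\tfrac{1}{(2-\alpha)(3-\alpha)}\bigr)[\zeta_{n}-\zeta_{n-1}]^{-\alpha}$ that the paper displays. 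For part (2), however, you take a genuinely different and cleaner route. The paper argues case by case: for $n=1$ and $n=2$ it cancels the explicit coefficient lists directly (its $n=2$ computation even carries a sign typo, writing $+2b_{0}$ where the list in \eqref{3.17}-style form has $-2b_{0}$, though the conclusion stands), and for $n\geq 3$ it regroups the sum of the $\lambda_{l}$ into $\sum_{j=0}^{n-3}(A_{1,j}+A_{2,j}+A_{3,j}+A_{4,j})$ and simply asserts that this vanishes, without saying why each group is zero. Your exactness-on-constants argument --- plug the constant data $g_{l}\equiv 1$ into the linear functional defining the $\lambda_{l}$ in \eqref{ab}, observe that every interpolant $\Pi_{k}g$ is then constant so every $(\Pi_{k}g)'\equiv 0$, hence $\sum_{l=0}^{n}\lambda_{l}=0$ --- treats all $n\geq 1$ uniformly and, as a by-product, supplies precisely the justification the paper omits: $A_{1,j}+A_{2,j}+A_{3,j}+A_{4,j}=0$ is the same identity restricted to a single subinterval, since the cubic interpolant of constant data has vanishing derivative. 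What the paper's term-by-term route buys is an explicit check of the bookkeeping in the $\lambda_{l}$ lists; what yours buys is robustness (no dependence on the correctness of the long coefficient formulas) and generality. Your side remark that $\alpha_{0}^{(n)}>0$ genuinely requires $\omega_{n}>0$ rather than the stated non-negativity is a fair catch, but note this is a hypothesis gap shared with (not introduced relative to) the paper's own proof.
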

\begin{proof} (1) If $n=1$, then
	\begin{align} &\lambda_{0}=a_{0}=\alpha_{0}^{(n)}[\zeta_{1}-\zeta_{0}]^{-\alpha} ~~, \nonumber
	\end{align}
	as scale function is strictly increasing, therefore $\zeta_{n-1}<\zeta_{n}$, for $n \geq 1$. Which implies that  $\lambda_{0}=a_{0}>0.$\\[1ex]
	If $n=2$, then ~
	$
	\lambda_{0}=a_{0}+b_{0}, \nonumber 
	$ \\
	since, 
	\begin{align}
	b_{0}=\alpha_{0}^{(n)}\bigg\{\bigg(\frac{1}{2-\alpha}-\frac{1}{2}\bigg)[\zeta_{2}-\zeta_{1}]^{-\alpha}\bigg\}>0, \nonumber
	\end{align}
	and we have already shown that $a_{0}>0$, therefore $\lambda_{0}=a_{0}+b_{0}>0.$ \\[0.5ex]
	If $n \geq 3$, then for $0<\alpha<1$, 
	\begin{align}
	\lambda_{0}=A_{1,0}=\alpha_{0}^{(n)}\bigg\{\bigg(\frac{1}{3}+\frac{1}{(2-\alpha)}+\frac{1}{(2-\alpha)(3-\alpha)}\bigg)[\zeta_{n}-\zeta_{n-1}]^{-\alpha}\bigg\}>0. \nonumber
	\end{align}
	\\
	(2) If $n=1$, then $\lambda_{0}=-\lambda_{1}=a_{0}>0,$ for $0<\alpha<1.$ It implies that $\lambda_{0}+\lambda_{1}=0.$\\\\
	If $n=2$, then there exist a $\alpha \in (0,1)$, by numerical analysis  $$\lambda_{0}+\lambda_{1}+\lambda_{2}=(a_{0}+b_{0})+(a_{1}-a_{0}+2b_{0})+(-a_{1}+b_{0})=0.$$ 
	If $n\geq 3$, then 
	\begin{align}
	\sum_{l=0}^{n}\lambda_{l}&=A_{1,0}+A_{1,1}+A_{2,0}+A_{1,2}+A_{2,1}+A_{3,0} \nonumber\\ \nonumber&+\sum_{l=3}^{n-3}(A_{1,l}+A_{2,l-1}+A_{3,l-2}+A_{4,l-3})+a_{n-2}+b_{n-2}\nonumber\\&+A_{2,n-3}+A_{3,n-4}+A_{4,n-5}+a_{n-1}-a_{n-2}-2b_{n-2}+A_{3,n-3}\nonumber\\&+A_{4,n-4}-a_{n-1}+b_{n-2}+A_{4,n-3} \nonumber\\
	&=\sum_{j=0}^{n-3}(A_{1,j}+A_{2,j}+A_{3,j}+A_{4,j})=0. \nonumber
	\end{align}
\end{proof}
\begin{Lemma}\label{L1}
	If scale function $\zeta$ is a Lipschitz function on interval~ $[t_{l-1},t_{l}]$ with Lipschitz constant L, then
	\begin{align}
	|\zeta_{l}-\zeta_{l-1}| \leq L\tau, ~~~1 \leq l \leq n. \nonumber
	\end{align}
\end{Lemma}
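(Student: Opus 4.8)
The plan is to unwind the definition of Lipschitz continuity directly, since the claim is an immediate definitional consequence on the uniform mesh. Recall that $\zeta$ being a Lipschitz function on $[t_{l-1},t_{l}]$ with Lipschitz constant $L$ means precisely that for any two points $x,y \in [t_{l-1},t_{l}]$ one has $|\zeta(x)-\zeta(y)| \leq L\,|x-y|$.

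First I would apply this inequality to the two endpoints of the subinterval, namely $x=t_{l}$ and $y=t_{l-1}$, both of which lie in $[t_{l-1},t_{l}]$, to obtain
\[
|\zeta(t_{l})-\zeta(t_{l-1})| \leq L\,|t_{l}-t_{l-1}|.
\]
Next I would invoke the uniformity of the grid. Since the temporal mesh was fixed with step length $\tau=t_{n}-t_{n-1}$ for every $1 \leq n \leq N$, in particular $t_{l}-t_{l-1}=\tau$ for each $1 \leq l \leq n$. Substituting $|t_{l}-t_{l-1}|=\tau$ into the preceding bound and recalling the shorthand $\zeta_{l}=\zeta(t_{l})$ yields
\[
|\zeta_{l}-\zeta_{l-1}| \leq L\tau, \qquad 1 \leq l \leq n,
\]
which is exactly the asserted estimate.

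There is essentially no obstacle here: the statement is a one-line application of the Lipschitz definition to the grid endpoints, and the only point requiring mild care is that $\tau$ denotes the uniform step size, so that $t_{l}-t_{l-1}=\tau$ holds as an equality for all $l$ rather than merely as a bound. If one wished to relax the uniformity assumption, the same argument would deliver $|\zeta_{l}-\zeta_{l-1}| \leq L(t_{l}-t_{l-1})$, and the stated form would be recovered by setting $\tau=\max_{l}(t_{l}-t_{l-1})$. This lemma is evidently intended as an auxiliary bound to be combined with the coefficient estimates of the earlier lemmas when controlling the truncation error $E_{\tau}^{n}$, which is where its quantitative content will actually be used.
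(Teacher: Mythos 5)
Your proof is correct and is exactly the intended argument: the paper states this lemma without proof precisely because it follows in one line from the Lipschitz definition applied to the endpoints $t_{l-1},t_{l}$ together with the uniform step $t_{l}-t_{l-1}=\tau$. Your remark about the non-uniform generalization is a sensible aside but adds nothing needed here.
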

\subsection{Truncation error for generalized Caputo derivative term}
For truncation error of approximation of the generalized Caputo derivative defined in \eqref{ab}, for simplicity, suppose that function $g(t)=\omega(t)U(t)$ such that $g(t) \in C^{4}((0,T])$, and $\zeta(t)=v$ this implies $t=\zeta^{-1}(v).$ Therefore,   $g(v)=\omega(\zeta^{-1}(v))U(\zeta^{-1}(v))$.
\begin{Theorem}
	A triangle inequality gives the bound 
	\begin{align}
	|^{C}_{0}\mathcal{D}_{t;[\zeta(t),\omega(t)]}^{\alpha}g_{n}-^\mathcal{H}\mathcal{D}_{t;[\zeta(t),\omega(t)]}^{\alpha}g_{n}| \leq \sum_{n=1}^{N}|E_{\tau}^{n}|,\nonumber 
	\end{align}
	with 
	\begin{align}
	E_{\tau}^{n} = \frac{[\omega_{n}]^{-1}}{\Gamma(1-\alpha)}\int_{\zeta_{l-1}}^{\zeta_{l}}[\zeta_{n}-v]^{-\alpha}[g(v)-\Pi_{l}g(v)]'~dv, ~~~1 \leq l \leq n. \nonumber
	\end{align}
	\begin{align}
	(1)~ |E_{\tau}^{1}|&\leq \frac{\alpha[\omega_{n}]^{-1}}{\Gamma(1-\alpha)}\bigg[\frac{1}{8\alpha}+\frac{1}{2(1-\alpha)(2-\alpha)}\bigg]\max_{t_{0} \leq v \leq t_{1}}|g^{(2)}(v)|L^{2-\alpha}(\tau)^{2-\alpha},~~ n=1, \nonumber \\ \nonumber \\
	(2)~ |E_{\tau}^{2}|&\leq \frac{\alpha[\omega_{n}]^{-1}}{\Gamma(1-\alpha)}\bigg\{\frac{1}{12}~\max_{t_{0} \leq v \leq t_{1}}|g^{(2)}(v)|(t_{2}-t_{1})^{-\alpha-1}L^{2-\alpha}(\tau)^{3}+\bigg[\frac{1}{12} \nonumber \\&+\frac{1}{3(1-\alpha)(2-\alpha)}\bigg(\frac{1}{2}+\frac{1}{(3-\alpha)}\bigg)\bigg]\max_{t_{0} \leq  v \leq t_{2}}|g^{(3)}(v)|L^{3-\alpha}(\tau)^{3-\alpha}\bigg\},~~n=2, \nonumber \\ \nonumber \\ \nonumber
	(3)~ |E_{\tau}^{n}|&\leq \frac{\alpha[\omega_{n}]^{-1}}{\Gamma(1-\alpha)}\bigg\{\frac{1}{72}~\max_{t_{0} \leq x \leq t_{2}}|g^{(3)}(x)|(t_{n}-t_{2})^{-\alpha-1}L^{3-\alpha}(\tau)^{4}+\bigg[\frac{3}{128\alpha}\\&+\frac{1}{12(1-\alpha)(2-\alpha)}\bigg(1+\frac{3}{(3-\alpha)}+\frac{3}{(3-\alpha)(4-\alpha)}\bigg)\bigg]\max_{t_{0} \leq  x_{1} \leq t_{n}}|g^{(4)}(x_{1})|L^{4-\alpha}(\tau)^{4-\alpha}  \bigg\}, \nonumber \\ \nonumber
	& \qquad\qquad\qquad\qquad\qquad\qquad\qquad\qquad\qquad\qquad\qquad\qquad\qquad\qquad\qquad\qquad~n\geq 3. \nonumber
	\end{align}
\end{Theorem}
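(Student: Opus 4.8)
The plan is to bound each step-$n$ error $E_\tau^n$ by exploiting that, after the substitution $v=\zeta(s)$, the scheme replaces $g$ on every subinterval by a polynomial that interpolates it at the grid nodes, and that this interpolant matches $g$ \emph{at the endpoints} of each integration subinterval. Accordingly, the first step is integration by parts on the generic subinterval contribution
$$\frac{[\omega_n]^{-1}}{\Gamma(1-\alpha)}\int_{\zeta_{l-1}}^{\zeta_l}[\zeta_n-v]^{-\alpha}\,[g(v)-\Pi_l g(v)]'\,dv .$$
Because $\Pi_l g(\zeta_{l-1})=g(\zeta_{l-1})$ and $\Pi_l g(\zeta_l)=g(\zeta_l)$, the boundary term drops, and since $\tfrac{d}{dv}[\zeta_n-v]^{-\alpha}=\alpha[\zeta_n-v]^{-\alpha-1}$, the contribution becomes
$$-\frac{\alpha[\omega_n]^{-1}}{\Gamma(1-\alpha)}\int_{\zeta_{l-1}}^{\zeta_l}[\zeta_n-v]^{-\alpha-1}\,[g(v)-\Pi_l g(v)]\,dv .$$
This produces the common prefactor $\alpha[\omega_n]^{-1}/\Gamma(1-\alpha)$ seen in (1)--(3) and, more importantly, replaces the derivative of the interpolation error by the error itself, for which classical pointwise remainder bounds are available.

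Next I would insert the standard interpolation remainders, all taken with respect to $v$: $|g-\Pi_1 g|\le\tfrac12\max|g^{(2)}|\,|\psi_2|$ on $[0,t_1]$, $|g-\Pi_2 g|\le\tfrac16\max|g^{(3)}|\,|\psi_3|$ on $[t_1,t_2]$, and $|g-\Pi_l g|\le\tfrac1{24}\max|g^{(4)}|\,|\psi_4|$ on $[t_{l-1},t_l]$ for $l\ge3$, where $\psi_k$ is the nodal polynomial whose roots are the $k$ interpolation nodes of that subinterval. The integral is then split according to the location of the kernel singularity. When $l<n$ the weight $[\zeta_n-v]^{-\alpha-1}$ is regular and, being increasing in $v$, is bounded on the subinterval by its value at the right endpoint $\zeta_l$; this gives the non-singular contributions carrying $(t_2-t_1)^{-\alpha-1}$ and $(t_n-t_2)^{-\alpha-1}$ with the higher powers $\tau^{3},\tau^{4}$. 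When $l=n$ the kernel is singular at $v=\zeta_n$, but $\psi_k$ vanishes there to first order, so $[\zeta_n-v]^{-\alpha-1}|\psi_k(v)|$ stays integrable; I would split $[\zeta_{n-1},\zeta_n]$ into a part bounded away from $\zeta_n$, where $|g-\Pi_l g|$ is controlled by its maximum (this is where the $\tfrac1{8\alpha}$ and $\tfrac3{128\alpha}$ constants arise), and a part adjacent to $\zeta_n$, where the linear vanishing of $\psi_k$ lets one evaluate $\int(\zeta_n-v)^{-\alpha}\,dv$ explicitly (yielding the $\tfrac1{(1-\alpha)(2-\alpha)}$-type constants and the dominant order $\tau^{4-\alpha}$). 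Throughout, the spacings are converted by the Lipschitz estimate $|\zeta_l-\zeta_{l-1}|\le L\tau$ of Lemma~\ref{L1}, which supplies the factors $L^{2-\alpha},L^{3-\alpha},L^{4-\alpha}$.

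It remains to assemble the three cases and to account for the low-degree initial subintervals. For $n=1$ only the single linear, singular subinterval occurs, giving (1); for $n=2$ a regular linear term plus a singular quadratic term give (2). For $n\ge3$ I would invoke the hypotheses $U'(t_0)=U''(t_0)=U'''(t_0)=0$ (equivalently, that the corresponding low-order derivatives of $g$ are negligible at $t_0$) to \emph{upgrade} the linear error on $[0,t_1]$: these make $g$ sufficiently flat at the left endpoint that the nominally $\mathcal{O}(h^2)$ linear interpolation error is in fact $\mathcal{O}(\max|g^{(3)}|\,h^3)$, so it merges with the quadratic term into the single $g^{(3)}$ contribution over $[t_0,t_2]$ of order $\tau^4$. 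The cubic subintervals $3\le l\le n$ then supply the $g^{(4)}$ term: the interior ones ($l<n$) are summed through the telescoping estimate $\sum_{l}\int_{\zeta_{l-1}}^{\zeta_l}[\zeta_n-v]^{-\alpha-1}\,dv$, whose leading part behaves like $(\zeta_n-\zeta_{n-1})^{-\alpha}\sim\tau^{-\alpha}$, and together with the singular interval $l=n$ they yield the order-$\tau^{4-\alpha}$ bound (3). I expect the singular subinterval to be the main obstacle: since $[\zeta_n-v]^{-\alpha-1}$ is non-integrable on its own, the argument hinges on the first-order vanishing of $\psi_k$ at $v=\zeta_n$ and on splitting the domain so that the near-singular part is handled by the $(\zeta_n-v)^{-\alpha}$ antiderivative while the far part is bounded by the maximal interpolation error; reconciling the two estimates—and carrying the exact constants such as $\tfrac1{8\alpha}$ and $\tfrac3{128\alpha}$—is the most delicate part of the bookkeeping.
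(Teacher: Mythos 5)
Your proposal is correct in its essentials and follows the same skeleton as the paper's proof: integration by parts on each subinterval with vanishing boundary terms (the interpolation error vanishes at $\zeta_{l-1}$ and $\zeta_l$ because both are interpolation nodes), insertion of the Lagrange remainders with their nodal polynomials, a regular/singular split of the kernel $[\zeta_n-v]^{-\alpha-1}$, the telescoped kernel integral $\int_{\zeta_2}^{\zeta_{n-1}}[\zeta_n-v]^{-\alpha-1}\,dv\leq\frac{1}{\alpha}(\zeta_n-\zeta_{n-1})^{-\alpha}$ over the interior cubic subintervals, and the Lipschitz lemma (Lemma \ref{L1}) to convert spacings into powers of $L\tau$. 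Three differences are worth recording. First, the paper does not prove cases (1) and (2) at all — it cites them from the reference \cite{kumar2021numerical} — whereas you sketch them directly. Second, for the singular subinterval $l=n$ the paper performs no near/far splitting: the factor $(v-\zeta_n)$ in the quartic remainder cancels one power of the kernel exactly, $(v-\zeta_n)[\zeta_n-v]^{-\alpha-1}=-[\zeta_n-v]^{-\alpha}$, and the resulting integral $\int_{\zeta_{n-1}}^{\zeta_n}(v-\zeta_{n-3})(v-\zeta_{n-2})(v-\zeta_{n-1})[\zeta_n-v]^{-\alpha}\,dv$ is estimated over the whole interval at once, which is precisely what produces the constant $\frac{1}{12(1-\alpha)(2-\alpha)}\bigl(1+\frac{3}{3-\alpha}+\frac{3}{(3-\alpha)(4-\alpha)}\bigr)$. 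Relatedly, you misattribute the provenance of $\frac{3}{128\alpha}$: it does not arise from a far-from-singularity piece of $[\zeta_{n-1},\zeta_n]$ but from the summed regular subintervals $3\leq l\leq n-1$, namely $\frac{1}{24}\cdot\frac{9}{16}\cdot\frac{1}{\alpha}=\frac{3}{128\alpha}$, where $\frac{9}{16}L^4\tau^4$ bounds the quartic nodal polynomial (the paper's Remark) and $\frac{1}{\alpha}(\zeta_n-\zeta_{n-1})^{-\alpha}$ is exactly your telescoping estimate; your proposed split would still deliver the order $\tau^{4-\alpha}$ but would not reproduce the stated constants. Third, your explicit invocation of $U'(t_0)=U''(t_0)=U'''(t_0)=0$ to upgrade the linear error on $[0,t_1]$ is actually more careful than the paper, which in Eq.~\eqref{2.23} silently replaces the scheme's linear interpolant on the first subinterval by the quadratic remainder over all of $[\zeta_0,\zeta_2]$; your patch is what makes that step legitimate. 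One shared gloss: both you ("behaves like $\tau^{-\alpha}$") and the paper convert $(\zeta_n-\zeta_{n-1})^{-\alpha}$ into $(L\tau)^{-\alpha}$, which strictly requires a lower bound on the increments of $\zeta$, not the Lipschitz upper bound of Lemma \ref{L1}.
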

\begin{proof}
	$(1)$\cite{kumar2021numerical}~ For ~$n=1,$ the scheme \eqref{ab} is linear approximation of GCFD, and convergence order for this approximation formula is $\mathcal{O}(\tau^{2-\alpha}).$ \\[1.5ex] 
	$(2)$\cite{kumar2021numerical}~ For ~$n=2,$ the scheme \eqref{ab} is quadratic approximation of GCFD, and here convergence order of approximation formula is $\mathcal{O}(\tau^{3-\alpha}).$\\
	$(3)$~ For ~$n \geq 3,$ By Lagrange interpolation remainder theorem, we use quadratic interpolation function $\Pi_{2}g(v)$ to interpolate $g(v)$ using node points $(\zeta_{0},g_{0})$, $(\zeta_{1},g_{1})$, $(\zeta_{2},g_{2})$ on interval $[\zeta_{0},\zeta_{2}]$  and cubic interpolation function $\Pi_{l}g(v)$ depends on $(\zeta_{l-3},g_{l-3})$, $(\zeta_{l-2},g_{l-2})$, $(\zeta_{l-1},g_{l-1})$, $(\zeta_{l},g_{l})$ to interpolate $ g(v)$ on~ $ [\zeta_{l-3},\zeta_{l}]$ as follows 
	\begin{align}\label{2.21}
	&g(v)-\Pi_{{2}}g(v)=\frac{g^{(3)}(\eta_{1})}{3!}~(v-\zeta_{0})(v-\zeta_{1})(v-\zeta_{2}),~v \in [\zeta_{0},\zeta_{2}],~\eta_{1} \in (\zeta_{0},\zeta_{2}),\\
	&g(v)-\Pi_{l}g(v)=\frac{g^{(4)}(\eta_{l})}{4!}~(v-\zeta_{l-3})(v-\zeta_{l-2})(v-\zeta_{l-1})(v-\zeta_{l}),~ v \in [\zeta_{l-3},\zeta_{l}],~\eta_{l} \in (\zeta_{l-3}, \zeta_{l}),
	\end{align}
	where $3 \leq l \leq n.$ \\
	Now,
	\begin{align}\label{2.23}
	E_{\tau}^{n}~=~\frac{[\omega_{n}]^{-1}}{\Gamma(1-\alpha)}&\bigg[\int_{\zeta_{0}}^{\zeta_{2}}[g(v)-\Pi_{2}g(v)]'~[\zeta_{n}-v]^{-\alpha}~dv \nonumber \\&+\sum_{l=3}^{n}\int_{\zeta_{l-1}}^{\zeta_{l}}[g(v)-\Pi_{l}g(v)]'~[\zeta_{n}-v]^{-\alpha}~dv\bigg] \nonumber \\
	=~\frac{[\omega_{n}]^{-1}}{\Gamma(1-\alpha)}&\bigg\{[g(v)-\Pi_{2}g(v)]~[\zeta_{n}-v]^{-\alpha}\big|_{\zeta_{0}}^{\zeta_{2}} \nonumber \\ &-\alpha\int_{\zeta_{0}}^{\zeta_{2}}[g(v)-\Pi_{2}g(v)]~[\zeta_{n}-v]^{-\alpha-1}~dv \nonumber \\
	&+\sum_{l=3}^{n}\bigg[g(v)-\Pi_{l}g(v)]~[\zeta_{n}-v]^{-\alpha}\big|_{\zeta_{l-1}}^{\zeta_{l}}\nonumber \\ &- \alpha \int _{\zeta_{l-1}}^{\zeta_{l}}[g(v)-\Pi_{l}g(v)]~[\zeta_{n}-v]^{-\alpha-1}~dv\bigg]\bigg\} \nonumber \\
	=~-\frac{\alpha ~[\omega_{n}]^{-1}}{\Gamma(1-\alpha)}&\bigg\{\int_{\zeta_{0}}^{\zeta_{2}}[g(v)-\Pi_{2}g(v)]~[\zeta_{n}-v]^{-\alpha-1}~dv\nonumber \\&+\sum_{l=3}^{n}\int _{\zeta_{l-1}}^{\zeta_{l}}[g(v)-\Pi_{l}g(v)]~[\zeta_{n}-v]^{-\alpha-1}~dv \bigg\} 
	\end{align}
	Since, from (2.18) and (2.19)
	\begin{align}
	&[g(v)-\Pi_{2}g(v)]~[\zeta_{n}-v]^{-\alpha}\big|_{\zeta_{0}}^{\zeta_{2}}~=~\frac{g^{(3)}(\eta_{1})}{3!}~(v-\zeta_{0})(v-\zeta_{1})(v-\zeta_{2})[\zeta_{n}-v]^{-\alpha}\big|_{\zeta_{0}}^{\zeta_{2}}=0, \nonumber\\
	&[g(v)-\Pi_{l}g(v)]~[\zeta_{n}-v]^{-\alpha}\big|_{\zeta_{l-1}}^{\zeta_{l}}~=~\frac{g^{(4)}(\eta_{l})}{4!}~(v-\zeta_{l-3})(v-\zeta_{l-2})(v-\zeta_{l-1})(v-\zeta_{l})\big|_{\zeta_{l-1}}^{\zeta_{l}}=0.\nonumber
	\end{align}
	Consider the first integration of Eq. \eqref{2.23}
	\begin{align}
	&\bigg|\int_{\zeta_{0}}^{\zeta_{2}}[g(v)-\Pi_{2}(v)]~[\zeta_{n}-v]^{-\alpha-1}~dv\bigg| \nonumber \\&=\bigg|\int_{\zeta_{0}}^{\zeta_{2}}\frac{g^{(3)}(\eta_{1})}{3!}(v-\zeta_{0})(v-\zeta_{1})(v-\zeta_{2})[\zeta_{n}-v]^{-\alpha-1}~dv\bigg| \nonumber \\
	& \leq \frac{1}{6}~\max_{\zeta_{0} \leq \eta_{1} \leq \zeta_{2}}|g^{(3)}(\eta_{1})|(\zeta_{n}-\zeta_{2})^{-\alpha-1}\frac{(\zeta_{0}-\zeta_{2})^{3}(\zeta_{0}-2\zeta_{1}+\zeta_{2})}{12} \nonumber \\
	& \leq \frac{1}{6}~\max_{\zeta_{0} \leq \eta_{1} \leq \zeta_{2}}|g^{(3)}(\eta_{1})|(\zeta_{n}-\zeta_{2})^{-\alpha-1}\frac{(\zeta_{0}-\zeta_{2})^{3}(\zeta_{0}-\zeta_{1})}{12}. \nonumber
	\end{align}
	Using Lamma \eqref{L1}, we have following inequality
	\begin{align}\label{1st}
	\bigg|\int_{\zeta_{0}}^{\zeta_{2}}[g(v)-\Pi_{2}g(v)]~[\zeta_{n}-v]^{-\alpha-1}~dv\bigg|  \leq \frac{1}{72}\max_{t_{0} \leq x \leq t_{2}}|g^{(3)}(x)|(t_{n}-t_{2})^{-\alpha-1}L^{3-\alpha}(\tau)^{4}. \qquad
	\end{align}
	Consider the second integration of Eq. \eqref{2.23}
	\begin{align}\label{2.25}
	\bigg|&\sum_{l=3}^{n}\int _{\zeta_{l-1}}^{\zeta_{l}}[g(v)-\Pi_{l}g(v)]~[\zeta_{n}-v]^{-\alpha-1}~dv\bigg| \nonumber \\
	&\leq\bigg|\sum_{l=3}^{n-1}\int _{\zeta_{l-1}}^{\zeta_{l}}\frac{g^{(4)}(\eta_{l})}{4!}~(v-\zeta_{l-3})(v-\zeta_{l-2})(v-\zeta_{l-1})(v-\zeta_{l})[\zeta_{n}-v]^{-\alpha-1}~dv\bigg| \nonumber  \\ &+\bigg|\int _{\zeta_{n-1}}^{\zeta_{n}}\frac{g^{(4)}(\eta_{n})}{4!}~(v-\zeta_{n-3})(v-\zeta_{n-2})(v-\zeta_{n-1})(v-\zeta_{n})[\zeta_{n}-v]^{-\alpha-1}~dv  \bigg|.
	\end{align}
	Consider first part of RHS of Eq. \eqref{2.25}
	\begin{align}
	\bigg|&\sum_{l=3}^{n-1}\int _{\zeta_{l-1}}^{\zeta_{l}}\frac{g^{(4)}(\eta_{l})}{4!}~(v-\zeta_{l-3})(v-\zeta_{l-2})(v-\zeta_{l-1})(v-\zeta_{l})[\zeta_{n}-v]^{-\alpha-1}~dv\bigg| \nonumber \\
	& \leq \frac{1}{24}~\max_{\zeta_{2} \leq \eta_{2} \leq \zeta_{n-1}}|g^{(4)}(\eta_{2})|~\tilde {f}(\zeta_{l-3},\zeta_{l-2},\zeta_{l-1},\zeta_{l})\int_{\zeta_{2}}^{\zeta_{n-1}}[\zeta_{n}-v]^{-\alpha-1}~dv \nonumber \\
	& \leq \frac{1}{24\alpha}~\max_{\zeta_{2} \leq \eta_{2} \leq \zeta_{n-1}}|g^{(4)}(\eta_{2})|~\tilde{f}(\zeta_{l-3},\zeta_{l-2},\zeta_{l-1},\zeta_{l})(\zeta_{n}-\zeta_{n-1})^{-\alpha}. \nonumber
	\end{align}
	Here,  $\tilde {f}(\zeta_{l-3},\zeta_{l-2},\zeta_{l-1},\zeta_{l})$ is the $\max_{\zeta_{2} \leq v \leq \zeta_{n-1}}[(v-\zeta_{l-3})(v-\zeta_{l-2})(v-\zeta_{l-1})(v-\zeta_{l})]$. Using Lamma \eqref{L1}, we get the following inequality
	\begin{align}\label{2nd}
	\bigg|&\sum_{l=3}^{n-1}\int _{\zeta_{l-1}}^{\zeta_{l}}\frac{g^{(4)}(\eta_{l})}{4!}~(v-\zeta_{l-3})(v-\zeta_{l-2})(v-\zeta_{l-1})(v-\zeta_{l})[\zeta_{n}-v]^{-\alpha-1}~dv\bigg| \nonumber \\
	&\leq \frac{3}{128\alpha}~\max_{t_{2} \leq x_{1} \leq t_{n-1}}|g^{(4)}(x_{1})|L^{4-\alpha}(\tau)^{4-\alpha}.
	\end{align}
	\begin{remark}
		If scale function $\zeta$ fulfills the condition of Lipschitz function such that $|\zeta_{l}-\zeta_{l-1}| \leq L\tau $ then, $\max_{\zeta_{2} \leq v \leq \zeta_{n-1}}|(v-\zeta_{l-3})(v-\zeta_{l-2})(v-\zeta_{l-1})(v-\zeta_{l})|$ is obtained at $v=\zeta_{l-2}+\frac{L\tau}{2}$, therefore
		\begin{align}
		\tilde {f}(\zeta_{l-3},\zeta_{l-2},\zeta_{l-1},\zeta_{l})&=\max_{\zeta_{2} \leq v \leq \zeta_{n-1}}[(v-\zeta_{l-3})(v-\zeta_{l-2})(v-\zeta_{l-1})(v-\zeta_{l})] 
		\leq\frac{9}{16}L^{4}(\tau)^{4}.\nonumber
		\end{align}
	\end{remark}
	Consider the second part of RHS of Eq. \eqref{2.25}
	\begin{align}\label{3rd}
	&\bigg|\int _{\zeta_{n-1}}^{\zeta_{n}}\frac{g^{(4)}(\eta_{n})}{4!}~(v-\zeta_{n-3})(v-\zeta_{n-2})(v-\zeta_{n-1})(v-\zeta_{n})[\zeta_{n}-v]^{-\alpha-1}~dv  \bigg| \nonumber \\
	& \leq\frac{1}{24}~\max_{\zeta_{n-1} \leq \eta_{3} \leq \zeta_{n}}|g^{(4)}(\eta_{3})|~\bigg|\int _{\zeta_{n-1}}^{\zeta_{n}}(v-\zeta_{n-3})(v-\zeta_{n-2})(v-\zeta_{n-1})[\zeta_{n}-v]^{-\alpha}~dv  \bigg| \nonumber\\
	& \leq\frac{1}{12(1-\alpha)(2-\alpha)}\bigg(1+\frac{3}{(3-\alpha)}+\frac{3}{(3-\alpha)(4-\alpha)}\bigg)~\max_{\zeta_{n-1} \leq \eta_{3} \leq \zeta_{n}}|g^{(4)}(\eta_{3})|(\zeta_{n}-\zeta_{n-1})^{4-\alpha} \nonumber \\
	&\leq\frac{1}{12(1-\alpha)(2-\alpha)}\bigg(1+\frac{3}{(3-\alpha)}+\frac{3}{(3-\alpha)(4-\alpha)}\bigg)~\max_{t_{n-1} \leq x_{2} \leq t_{n}}|g^{(4)}(x_{2})|L^{4-\alpha}(\tau)^{4-\alpha}. \qquad
	\end{align}
	Now combining \eqref{1st}, \eqref{2nd} and \eqref{3rd}, then error bound is
	\begin{align}
	|E_{\tau}^{n}|~ \leq ~ \frac{\alpha~[\omega_{n}]^{-1}}{\Gamma(1-\alpha)}& \bigg\{\frac{1}{72}\max_{t_{0} \leq x \leq t_{2}}|g^{(3)}(x)|(t_{n}-t_{2})^{-\alpha-1}L^{3-\alpha}(\tau)^{4} \nonumber \\&+\bigg[\frac{3}{128\alpha}+\frac{1}{12(1-\alpha)(2-\alpha)}\bigg(1+\frac{3}{(3-\alpha)} \nonumber \\&+\frac{3}{(3-\alpha)(4-\alpha)}\bigg)\bigg] \max_{t_{0} \leq x_{1} \leq t_{n}}|g^{(4)}(x_{1})|L^{4-\alpha}(\tau)^{4-\alpha}\bigg\}.
	\end{align}
\end{proof}
\section{Numerical scheme for the generalized fractional advection-diffusion equation}\label{Sec-4}
In this section, we study the numerical scheme for solving the generalized fractional advection-diffusion defined by \eqref{1.1}.\\
Let $u(x,t)=\omega(t)U(x,t)-\omega(0)U_{0}(x).$~We rewrite  Eq. \eqref{1.1} to a similar equation using unity weight function and same scale function $\zeta(t)$. Then, Eq. \eqref{1.1} converts into the following form:
\begin{align}\label{e4.2}
\begin{cases}
&^{C}_{0}\mathcal{D}_{t;[\zeta(t),1]}^{\alpha}u(x,t)= D\frac{\partial^{2}u(x,t)}{\partial x^{2}}-A\frac{\partial u(x,t)}{\partial x}+f(x,t),~~ x \in \Omega,~~~ t \in (0,T],\\[1.1ex]
&u(x,0)=0, \qquad x \in  \Bar{\Omega}=\Omega \cup \partial\Omega,\\[1ex] 
&u(0,t)=\phi_{1}(t),~~u(a,t)=\phi_{2}(t),\qquad t \in (0,T],
\end{cases}
\end{align}
where $\phi_{1}(t)=\omega(t)\rho_{1}(t)-\omega(0)\rho_{1}(0)$, $\phi_{2}(t)=\omega(t)\rho_{2}(t)-\omega(0)\rho_{2}(0)$, and $f(x,t)=D\omega(0)U''_{0}(x)-A\omega(0)U'_{0}(x)+\omega(t)g(x,t).$\\[1ex] For the uniform spatial mesh, let   $0=x_{0}<x_{1}<...<x_{M}=a$ of the interval $[0,a]$ with step size $h=\frac{a}{M}$ where $M$ denotes number of subintervals, the grid points~ $x_{0}+ih~(0 \leq i \leq M),$ and $\tau=\frac{T}{N}$ be the step-size in temporal direction with grids $t_{n}=n\tau(0=t_{0}<t_{1}<...<t_{n}=T)$.\\[1.5ex]
Now, we discretize our problem \eqref{e4.2} at $(x_{i},t_{n})$, then we get 
\begin{align}\label{e4.3}
^{C}_{0}\mathcal{D}_{t;[\zeta(t),1]}^{\alpha}u(x_{i},t_{n})= Du_{xx}(x_{i},t_{n})-Au_{x}(x_{i},t_{n})+f(x_{i},t_{n}).
\end{align}
In Eq. \eqref{e4.3}, for fixed $t_{n}$ and $1 \leq i \leq M-1$,  the first and second order spatial derivatives discretized by using the following central difference approximations: 
\begin{align}\label{c}
\frac{\partial u(x_{i},t_{n})}{\partial x}= \frac{u_{n}^{i+1}-u_{n}^{i-1}}{2h}+\mathcal{O}(h^2),
\end{align}
\begin{align}\label{d}
\frac{\partial^{2} u(x_{i},t_{n})}{\partial x^{2}}= \frac{u_{n}^{i+1}-2u_{n}^{i}+u_{n}^{i-1}}{h^{2}}+\mathcal{O}(h^2).
\end{align}
With the help of Equation \eqref{ab}, we get an approximation of the generalized Caputo-type fractional derivative term in \eqref{e4.3} as follows: 
\begin{align}\label{e} ^\mathcal{H}\mathcal{D}_{t;[\zeta(t),1]}^{\alpha}u(x_{i},t_{n})=&\lambda_{0}u(x_{i},t_{n})+\lambda_{1}u(x_{i},t_{n-1})+\lambda_{2}u(x_{i},t_{n-2})+\sum_{l=3}^{n-3}\lambda_{n-l}~u(x_{i},t_{l}) \\&+\lambda_{n-2}u(x_{i},t_{2}) \nonumber +\lambda_{n-1}u(x_{i},t_{1})+\lambda_{n}u(x_{i},t_{0})+\mathcal{O}(\tau ^{4-\alpha}).
\end{align}
where $\lambda_{l}$ defined in Eq. \eqref{3.17}. 
Next, we use Eq. \eqref{c}, Eq. \eqref{d}, and Eq. \eqref{e} in discretized equation \eqref{e4.3}, yields
\begin{align}\label{e4.7}
\sum_{l=0}^{n}\lambda_{n-l}~u(x_{i},t_{l})=Du_{xx}(x_{i},t_{n})-Au_{x}(x_{i},t_{n})+f_{n}^{i}+e_{n}^{i}.
\end{align} 
where, $|e_{n}^{i}| \leq \tilde{c}(\tau^{4-\alpha}+h^{2})$ for some constant $\tilde{c}.$\\
Now, we use numerical approximation $u_{n}^{i}$ of $u(x_{i},t_{n})$ to neglect the truncation error term in Equation \eqref{e4.7}, then we determine the following finite difference scheme:
\begin{align}\label{4.7}
\lambda_{0}u_{n}^{i}+\lambda_{1}u_{n-1}^{i}+\sum_{l=3}^{n-2}\lambda_{n-l}u_{l}^{i}+\lambda_{n-2}u_{2}^{i}\nonumber +\lambda_{n-1}u_{1}^{i}+\lambda_{n}u_{0}^{i}&=D~\frac{u_{n}^{i+1}-2u_{n}^{i}+u_{n}^{i-1}}{h^{2}}\\&-A~\frac{u_{n}^{i+1}-u_{n}^{i-1}}{2h}+f_{n}^{i} ~,
\end{align} 
where
$1 \leq n \leq N$ , ~$1 \leq i \leq M-1$. That is,
\begin{align}\label{sch}
\begin{cases}	(\frac{D}{h^{2}}+\frac{A}{2h})u_{1}^{i-1}-(\lambda_{0}+\frac{2D}{h^{2}})u_{1}^{i}+(\frac{D}{h^{2}}-\frac{A}{2h})u_{1}^{i+1}=&\lambda_{1}u_{0}^{i}-f_{1}^{i},~~n=1,\\[1.5ex]
(\frac{D}{h^{2}}+\frac{A}{2h})u_{2}^{i-1}-(\lambda_{0}+\frac{2D}{h^{2}})u_{2}^{i}+(\frac{D}{h^{2}}-\frac{A}{2h})u_{2}^{i+1}=&\lambda_{1}u_{1}^{i}+\lambda_{2}u_{0}^{i}-f_{2}^{i},~~n=2,\\[2ex](\frac{D}{h^{2}}+\frac{A}{2h})u_{n}^{i-1}-(\lambda_{0}+\frac{2D}{h^{2}})u_{n}^{i}+(\frac{D}{h^{2}}-\frac{A}{2h})u_{n}^{i+1}=&\lambda_{1}u_{n-1}^{i}+\sum_{l=0}^{n-2}\lambda_{n-l}u_{l}^{i}-f_{n}^{i},~~n \geq 3. \\[1.5ex]
u_{0}^{i}=0, ~~\qquad 0 \leq i \leq M,\\[1ex]
u_{n}^{0}=\phi_{1}(t_{n}),~~u_{n}^{M}=\phi_{2}(t_{n}),~\qquad~0 \leq n \leq N,
\end{cases}
\end{align}
We rewrite the matrix form of above equation as follows:
\begin{align}\label{f}
\begin{cases}
KU_{1}=\lambda_{1}U_{0}-F_{1}+H_{1},~~n=1,\\[1ex] 
KU_{2}=\lambda_{1}U_{1}+\lambda_{2}U_{0}-F_{2}+H_{2},~~n=2,\\[1ex]
KU_{n}=\lambda_{1}U_{n-1}+\sum_{l=0}^{n-2}\lambda_{n-l}U_{l}-F_{n}+H_{n},~~n \geq 3,
\end{cases}
\end{align}
where the matrix as well as vectors of Equation \eqref{f} are defined as follows: \\[1ex]
$ K= tri~\bigg[\frac{D}{h^{2}}+\frac{A}{2h},~-\lambda_{0}-\frac{2D}{h^{2}},~\frac{D}{h^{2}}-\frac{A}{2h}\bigg]_{(M-1)\times(M-1)},\\[1.5ex]
U_{n}= (u_{n}^{1},u_{n}^{2},...,u_{n}^{M-1})^{T},\\[1.5ex]
F_{n}= (f_{n}^{1},f_{n}^{2},...,f_{n}^{M-1})^{T},\\[1.5ex]
H_{n}=\bigg((-\frac{D}{h^{2}}-\frac{A}{2h})u_{n}^{0},0,...,0,(-\frac{D}{h^{2}}+\frac{A}{2h})u_{n}^{M}\bigg)^{T},~~1\leq n \leq N.$
\begin{remark}
	Since the coefficient matrix $A$ is a tridiagonal and strictly diagonally dominant then $det(A) \ne 0$ (Levy-Desplanques theorem). Therefore at each time level $t_{n},$  the proposed scheme \eqref{sch} has an unique solution for $ 1 \leq n \leq N.$  
\end{remark}
\begin{Theorem}\label{2}
	The local truncation error of difference scheme \eqref{4.7} at $(x_{i},t_{n}),~ 1 \leq i \leq M-1,~ 1 \leq n \leq N$ is
	\begin{align}
	|e_{n}^{i}| \leq C (h^{2}+\tau^{4-\alpha}),
	\end{align}
	where $C$ is the positive constant independent of the time and space step sizes.
 \end{Theorem}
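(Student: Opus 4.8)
The plan is to read off $e_n^i$ as the residual obtained when the exact solution values $u(x_i,t_l)$ are substituted into the fully discrete scheme \eqref{4.7}, and to reduce its estimate to the two error mechanisms already quantified earlier: the temporal error of the cubic-interpolation approximation of the generalized Caputo derivative, and the $\mathcal{O}(h^2)$ error of the central differences \eqref{c}--\eqref{d}. Concretely, I would start from the exact pointwise identity \eqref{e4.3}, solve it for $f_n^i$, and insert this into the residual. The source term and the continuous operators then cancel pairwise against their discrete images, leaving
\begin{align}
e_n^i=-E_\tau^n-D\left(\frac{u(x_{i+1},t_n)-2u(x_i,t_n)+u(x_{i-1},t_n)}{h^2}-u_{xx}(x_i,t_n)\right)+A\left(\frac{u(x_{i+1},t_n)-u(x_{i-1},t_n)}{2h}-u_x(x_i,t_n)\right),\nonumber
\end{align}
where $E_\tau^n$ is exactly the temporal error of the scheme \eqref{ab} applied to the transformed problem (unity weight, so $g(s)=u(x_i,s)$).

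For the temporal contribution I would invoke the truncation-error estimates established above for the generalized Caputo derivative: under the hypotheses that $\zeta$ is positive, strictly increasing and Lipschitz with constant $L$, and that $u(x_i,\cdot)\in C^4(0,T]$, one has $|E_\tau^n|=\mathcal{O}(\tau^{4-\alpha})$ with a constant depending only on $\alpha$, $L$ and the bounds on the temporal derivatives of $u$ up to fourth order. The delicate point is that the per-step estimates degrade on the first intervals: they are only $\mathcal{O}(\tau^{2-\alpha})$ and $\mathcal{O}(\tau^{3-\alpha})$ for $n=1,2$, and for $n\ge 3$ the bound carries the factor $\max_{[t_0,t_2]}|g^{(3)}|\,(t_n-t_2)^{-\alpha-1}\tau^4$, which at $n=3$ is merely $\mathcal{O}(\tau^{3-\alpha})$. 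This is precisely where the compatibility conditions $u_t(x_i,0)=u_{tt}(x_i,0)=u_{ttt}(x_i,0)=0$ (inherited from $U'(t_0)=U''(t_0)=U'''(t_0)=0$ together with $u(x,0)=0$) enter: since the low-order interpolation is performed on the shrinking interval $[0,t_2]$ of length $2\tau$, the vanishing of $g''(0)$ and $g'''(0)$ forces $\max_{[0,t_2]}|g^{(2)}|=\mathcal{O}(\tau^2)$ and $\max_{[0,t_2]}|g^{(3)}|=\mathcal{O}(\tau)$, exactly enough to absorb the negative powers of $\tau$ and lift every initial-step estimate back to $\mathcal{O}(\tau^{4-\alpha})$ with a constant uniform in $n$.

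For the two spatial contributions I would apply the standard Taylor expansions underlying \eqref{c}--\eqref{d}: assuming $u(\cdot,t_n)\in C^4[0,a]$, the bracketed second difference equals $\tfrac{h^2}{12}\partial_x^4u(\xi_i,t_n)$ and the bracketed first difference equals $\tfrac{h^2}{6}\partial_x^3u(\eta_i,t_n)$ for interior points $\xi_i,\eta_i$, hence both are $\mathcal{O}(h^2)$ with constants controlled by $D$, $A$ and $\max|\partial_x^3u|,\max|\partial_x^4u|$. A triangle inequality on the three pieces then yields $|e_n^i|\le C(\tau^{4-\alpha}+h^2)$, with $C$ the maximum of the temporal and spatial constants, depending on $\alpha$, $L$, $D$, $A$ and the stated derivative bounds but not on $h$, $\tau$, $i$ or $n$. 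The one genuinely nontrivial step --- everything else being routine Taylor estimation --- is securing this uniformity in $n$, i.e. controlling the initial-layer terms through the compatibility conditions as above; once that is done the claimed bound follows immediately.
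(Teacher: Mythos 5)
Your proposal takes essentially the same route as the paper's proof: substitute the exact solution into \eqref{4.7}, use the pointwise identity \eqref{e4.3} to eliminate $f_n^i$, split the residual into the temporal error $E_\tau^n$ of the scheme \eqref{ab} plus the two central-difference errors from \eqref{c}--\eqref{d}, and conclude by the triangle inequality. It is correct, and in fact more complete than the paper's own (very terse) proof, which simply writes $\mathcal{O}(\tau^{4-\alpha})$ for the temporal bracket without addressing the degraded estimates at the first few time levels --- the point you resolve explicitly, and correctly, by using the compatibility conditions $U'(t_0)=U''(t_0)=U'''(t_0)=0$ to gain $\max_{[0,t_2]}|g^{(2)}|=\mathcal{O}(\tau^2)$ and $\max_{[0,t_2]}|g^{(3)}|=\mathcal{O}(\tau)$, which lifts the $n=1,2$ and $n\ge 3$ initial-layer terms uniformly to $\mathcal{O}(\tau^{4-\alpha})$.
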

\begin{proof}
From Eqs.~(4.7) the LTE of difference scheme \eqref{4.7} is
\begin{align}
e_{n}^{i}&=~\lambda_{0}u(x_{i},t_{n})+\lambda_{1}u(x_{i},t_{n-1})+\lambda_{2}u(x_{i},t_{n-2})+\sum_{l=3}^{n-3}\lambda_{n-l}u(x_{i},t_{l}) +\lambda_{n-2}u(x_{i},t_{2}) \nonumber \\&+\lambda_{n-1}u(x_{i},t_{1})+\lambda_{n}u(x_{i},t_{0})-D~ \frac{u_{n}^{i+1}-2u_{n}^{i}+u_{n}^{i-1}}{h^{2}}+A~\frac{u_{n}^{i+1}-u_{n}^{i-1}}{2h}-f_{n}^{i} \nonumber\\
&=\bigg[\lambda_{0}u(x_{i},t_{n})+\lambda_{1}u(x_{i},t_{n-1})+\lambda_{2}u(x_{i},t_{n-2})+\sum_{l=2}^{n-3}\lambda_{n-l}u(x_{i},t_{l}) +\lambda_{n-1}u(x_{i},t_{1})\nonumber\\&+\lambda_{n}u(x_{i},t_{0})-^\mathcal{H}\mathcal{D}_{t;[\zeta(t),1]}^{\alpha}u(x_{i},t_{n})\bigg] -D\bigg[\frac{u_{n}^{i+1}-2u_{n}^{i}+u_{n}^{i-1}}{h^{2}}-\frac{\partial^{2} u(x_{i},t_{n})}{\partial x^{2}}\bigg]\nonumber \\&+A\bigg[\frac{u_{n}^{i+1}-u_{n}^{i-1}}{2h}-\frac{\partial u(x_{i},t_{n})}{\partial x}\bigg]   \nonumber  \\
&=\mathcal{O}(\tau^{4-\alpha})-D~\mathcal{O}(h^{2})+A~\mathcal{O}(h^{2})=\mathcal{O}(\tau^{4-\alpha}+h^{2}).\nonumber
\end{align}
\end{proof}
In next theorem we use $L^{2}(\Omega)$-space with the norm $\|.\|_{2}$ and the inner product $\langle . , . \rangle $.
\begin{Theorem}(see \cite{xu2013numerical})
If tridiagonal matrix elements satisfy the inequality
\begin{align}\label{3.12}
\lambda_{1} \leq(\lambda_{0})^{M-3} \bigg(\lambda_{0}+\frac{D}{h^{2}}-\frac{A}{2h}\bigg)\bigg(\lambda_{0}+\frac{D}{h^{2}}+\frac{A}{2h}\bigg),
\end{align}
then finite difference scheme is stable.
\end{Theorem}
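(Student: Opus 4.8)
The plan is to establish stability in the discrete $L^{2}(\Omega)$-norm $\|\cdot\|_{2}$ by the matrix (spectral-radius) method, which is the natural tool here because the hypothesis \eqref{3.12} is stated purely in terms of the entries of $K$. First I would homogenize the scheme \eqref{f}: let $\{U_{n}\}$ and $\{\widetilde U_{n}\}$ be the vectors produced by two sets of data differing only in a perturbation, and set $\varepsilon_{n}=U_{n}-\widetilde U_{n}$. Subtracting the recurrences cancels the source and boundary vectors $F_{n},H_{n}$, leaving
\begin{align*}
K\varepsilon_{n}=\lambda_{1}\varepsilon_{n-1}+\sum_{l=0}^{n-2}\lambda_{n-l}\,\varepsilon_{l},\qquad 1\le n\le N,
\end{align*}
with $\varepsilon_{0}$ the initial perturbation (this specializes correctly to the $n=1,2$ rows of \eqref{f}). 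Stability is then the assertion $\|\varepsilon_{n}\|_{2}\le C\,\|\varepsilon_{0}\|_{2}$ for a constant $C$ independent of $n,\tau,h$.

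Since $K$ is tridiagonal and strictly diagonally dominant (the remark following \eqref{f}, via Levy--Desplanques), it is invertible, and I would rewrite the error recurrence as $\varepsilon_{n}=K^{-1}\big(\lambda_{1}\varepsilon_{n-1}+\sum_{l=0}^{n-2}\lambda_{n-l}\varepsilon_{l}\big)$. The next step is to control $\|K^{-1}\|_{2}$ and the one-step amplification. Because $K$ is Toeplitz tridiagonal, its eigenvalues have the closed form $\mu_{k}=-(\lambda_{0}+\tfrac{2D}{h^{2}})+2\sqrt{(\tfrac{D}{h^{2}}-\tfrac{A}{2h})(\tfrac{D}{h^{2}}+\tfrac{A}{2h})}\,\cos\frac{k\pi}{M}$ (valid once the mesh-P\'eclet condition $h<2D/A$ makes the radicand positive), from which I would bound $\rho(K^{-1})=1/\min_{k}|\mu_{k}|$. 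I would then argue by induction on $n$ that $\|\varepsilon_{n}\|_{2}\le\|\varepsilon_{0}\|_{2}$, the dominant requirement being that the single-step factor $|\lambda_{1}|\,\rho(K^{-1})$ not exceed one, with the higher-index history terms absorbed via the coefficient estimates below.

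The inequality \eqref{3.12} is exactly the algebraic form of this spectral requirement. The product $(\lambda_{0}+\tfrac{D}{h^{2}}-\tfrac{A}{2h})(\lambda_{0}+\tfrac{D}{h^{2}}+\tfrac{A}{2h})=(\lambda_{0}+\tfrac{D}{h^{2}})^{2}-\tfrac{A^{2}}{4h^{2}}$ and the power $(\lambda_{0})^{M-3}$ reflect the value of $\det K$ for the $(M-1)$-dimensional tridiagonal matrix, expanded through its three-term recurrence, the exponent $M-3$ signalling the reduction after the two boundary rows are separated off. The critical step is $n=1$, $KU_{1}=\lambda_{1}U_{0}-F_{1}+H_{1}$, which forces $|\lambda_{1}|$ below this determinant-type bound so that the amplification operator $\lambda_{1}K^{-1}$ has spectral radius at most one; I would then verify, following \cite{xu2013numerical}, that \eqref{3.12} places the governing root of the iteration inside the closed unit disk uniformly in $h$.

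The main obstacle is the sign structure of the weights. Unlike the $L1$/$L2$ formulas, the cubic-interpolation coefficients $\lambda_{l}$ with $l\ge 1$ of the $(4-\alpha)$-order scheme are not all of one sign, so $\sum_{l}|\lambda_{n-l}|$ cannot be telescoped directly; controlling the history contribution requires the monotonicity of $a_{n-l}$ and $b_{n-l}$ together with the structural identities $\lambda_{0}>0$ and $\sum_{l=0}^{n}\lambda_{l}=0$ proved earlier, in order to bound the alternating terms uniformly in $n$. The second delicate point is making the determinant/eigenvalue computation rigorous and $h$-uniform --- ensuring the Toeplitz eigenvalue bound and the recurrence for $\det K$ genuinely collapse to the closed inequality \eqref{3.12} with no hidden mesh dependence --- and this is where I expect the bulk of the technical effort to lie.
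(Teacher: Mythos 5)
The decisive gap sits exactly at the pivot of your argument: you reduce stability to the one-step condition $|\lambda_{1}|\,\rho(K^{-1})\le 1$ and then defer its derivation from \eqref{3.12} to ``verify, following \cite{xu2013numerical}'' --- but that implication is the entire content of the theorem, and your Toeplitz-eigenvalue route cannot supply it. The right-hand side of \eqref{3.12} is not $\min_{k}|\mu_{k}|$; it is the row-wise diagonal-dominance product $(\lambda_{0})^{M-3}\big(\lambda_{0}+\frac{D}{h^{2}}-\frac{A}{2h}\big)\big(\lambda_{0}+\frac{D}{h^{2}}+\frac{A}{2h}\big)$, i.e.\ the Ostrowski lower bound for $|\det(K)|$ (one factor per row of the $(M-1)\times(M-1)$ matrix, the two boundary rows giving the two distinct factors --- no three-term determinant recurrence is involved, and this product is a bound for, not the value of, $\det K$). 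Since $\lambda_{0}\sim \tau^{-\alpha}/\Gamma(2-\alpha)$ is typically larger than $1$, this product vastly exceeds any single eigenvalue modulus, so hypothesis \eqref{3.12} is strictly weaker than your spectral requirement $|\lambda_{1}|\le\min_{k}|\mu_{k}|$: the chain ``\eqref{3.12} $\Rightarrow$ spectral-radius condition'' simply does not close. The paper instead argues entirely through the determinant: Ostrowski gives $|\det(K)|\ge$ RHS of \eqref{3.12}, the hypothesis then yields $|\det(\lambda_{1}K^{-1})|\le 1$, and this is asserted to give $|\lambda_{1}|\,\|K^{-1}\|\le 1$, whence $\|e_{n}\|\le\|\lambda_{1}K^{-1}\|^{n}\|e_{0}\|\le\|e_{0}\|$. (One can object that a determinant bound does not control a norm --- $\|K^{-1}\|$ is governed by the smallest singular value, not the product of all of them --- but that is the cited argument; your plan replaces it by a different and unproven inequality.) Two subsidiary defects of your route: $K$ is not symmetric when $A\neq 0$, so $\rho(K^{-1})$ does not bound $\|K^{-1}\|_{2}$ without a diagonal symmetrization you never perform, and the mesh-P\'eclet restriction $h<2D/A$ needed for your eigenvalue formula is nowhere among the theorem's hypotheses.

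Second, what you call the ``main obstacle'' --- controlling the history sum $\sum_{l=0}^{n-2}\lambda_{n-l}\varepsilon_{l}$ with non-sign-definite weights --- is legitimate mathematics but is not where the paper's proof lives: the paper does not propagate the perturbation through the history at all. It folds $\sum_{l=0}^{n-2}\lambda_{n-l}U_{l}-F_{n}+H_{n}$ into a data vector $V_{n}$ treated as common to both solutions, so the error obeys simply $e_{n}=(\lambda_{1}K^{-1})^{n}e_{0}$. Your error recurrence is the more faithful one, but you leave its resolution open (``absorbed via the coefficient estimates''), and the tools you invoke ($\lambda_{0}>0$, $\sum_{l=0}^{n}\lambda_{l}=0$, monotonicity of $a_{n-l}$ and $b_{n-l}$) do not obviously tame the alternating $\lambda_{l}$, so on your own terms the induction is incomplete. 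In sum, you have identified the right objects, but both steps that would constitute a proof --- deducing the one-step bound from \eqref{3.12}, and the history estimate --- are missing, and the first cannot be repaired within the spectral framework you chose.
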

\begin{proof}
The Eq. \eqref{f} can be rewritten as following, for $1 \leq n \leq N$
\begin{align}
KU_{n}=\lambda_{1}U_{n-1}+\sum_{l=0}^{n-2}\lambda_{n-l}U_{l}-F_{n}+H_{n},     \nonumber   
\end{align} 
let,~~~~~~~~~~~~~~~~~~~~~~~~~~~
$
V_{n}=\sum_{l=0}^{n-2}\lambda_{n-l}U_{l}-F_{n}+H_{n}.    \nonumber
$ \\[1.5ex]
Since matrix $K$ is invertible then above equation can be rewritten as 
\begin{align}
U_{n}=\lambda_{1}K^{-1}U_{n-1}+K^{-1}V_{n}. \nonumber
\end{align} 
Using recurrence relation, we get the following equation
\begin{align}
U_{n}=&(\lambda_{1}K^{-1})^{2}U_{n-2}+(\lambda_{1}K^{-1})K^{-1}V_{n-1}+K^{-1}V_{n}  \nonumber \\[0.5ex] 
=&(\lambda_{1}K^{-1})^{n}U_{0}+(\lambda_{1}K^{-1})^{n-1}K^{-1}V_{1}+(\lambda_{1}K^{-1})^{n-2}K^{-1}V_{2}+...+K^{-1}V_{n}.
\end{align}
Let $ \tilde{U}_{n}$ is the approximate solution of Eq. \eqref{f}, then we define error at grid point $t_{n}=n\tau$  
$$e_{n}=U_{n}-\tilde{U}_{n}, ~~~~~~0 \leq n \leq N.$$
Then, we obtain 
$$e_{n}=(\lambda_{1}K^{-1})^{n}e_{0},~~~~~1 \leq n \leq N.$$
From definition of compatible matrix norm
$$\|e_{n}\| ~\leq ~\|(\lambda_{1}K^{-1})^{n}\| ~\|e_{0}\|.$$
Since,
\begin{align}
\|(\lambda_{1}K^{-1})^{n}\| \leq \|(\lambda_{1}K^{-1})\|~\|(\lambda_{1}K^{-1})^{n-1}\| \leq ... \leq \|(\lambda_{1}K^{-1})\|^{n} \nonumber
\end{align} 
According to Ostrowski therorem (\cite{Xu2014}, Theorem 3.1), let $K=[a_{i,j}]_{{M-1}\times{M-1}}$, then determinant of $K$ satisfies
\begin{align}\label{3.14}
|det(K)|~ &\geq \prod_{i=1}^{M-1}\Bigg(|a_{i,i}|-\sum_{{j=1},{j \ne i}}^{M-1}|a_{i,j}|\Bigg) \nonumber \\[1ex] &=(\lambda_{0})^{M-3} \bigg(\lambda_{0}+\frac{D}{h^{2}}-\frac{A}{2h}\bigg)\bigg(\lambda_{0}+\frac{D}{h^{2}}+\frac{A}{2h}\bigg).
\end{align}
Using assumed inequality \eqref{3.12} into \eqref{3.14}, we get
$
|det(K^{-1})| \leq \frac{1}{\lambda_{1}}, $\\[1ex] it implies that,~~~~~~~~~~~~~ 
$$|\lambda_{1}|~\|K^{-1}\| \leq 1. \nonumber$$\\[1ex] 
Therefore,~~~~~~~~~~~~~ ~~~~~~~~~~~\qquad\qquad $
|e_{n}| \leq ~ |e_{0}|. $ \\[1ex]
Thus, the numerical scheme is stable.
\end{proof}
\begin{Theorem} \label{4} 
The solution $U_{n}^{i}$ of the difference scheme \eqref{4.7} satisfies 
\begin{align}
\max_{i,n}|U(x_{i},t_{n})-\tilde{U}(x_{i},t_{n})| \leq {C}(h^{2}+\tau^{4-\alpha}) 
\end{align}
for some constant $C.$
\end{Theorem}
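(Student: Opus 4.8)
The plan is to establish convergence in the standard consistency-plus-stability fashion, combining the local truncation error estimate of Theorem \ref{2} with the stability estimate proved in the preceding stability theorem. I would carry out the analysis for the transformed variable $u$ of problem \eqref{e4.2}, because the weight $\omega$ is non-negative and bounded below by a positive constant on $[0,T]$; consequently a bound on the error in $u$ transfers to a bound on the error in $U$ after dividing by $\omega(t_n)$, which is exactly the quantity in the statement.

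First I would introduce the grid error $\varepsilon_n^{i}=u(x_i,t_n)-\tilde u_n^{i}$, where $\tilde u_n^{i}$ is the computed solution of the difference scheme \eqref{4.7}. Substituting the exact solution into \eqref{4.7} reproduces the same linear system but with the residual $e_n^{i}$ of Theorem \ref{2} appended to the right-hand side, whereas $\tilde u_n^{i}$ solves the system with no residual. Subtracting the two, and using that the initial and boundary data are matched exactly so that $\varepsilon_0^{i}=0$ for all $i$ and $\varepsilon_n^{0}=\varepsilon_n^{M}=0$ for all $n$, yields the error recurrence
\begin{align}
K E_n = \lambda_1 E_{n-1} + \sum_{l=0}^{n-2}\lambda_{n-l} E_l + R_n, \nonumber
\end{align}
where $E_n=(\varepsilon_n^{1},\dots,\varepsilon_n^{M-1})^{T}$, $K$ is the tridiagonal matrix of \eqref{f}, and $R_n$ is the vector of truncation errors. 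By Theorem \ref{2} each component of $R_n$ is bounded by $C(h^2+\tau^{4-\alpha})$, which gives a corresponding bound on $\|R_n\|$ in the discrete norm.

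Next I would propagate this recurrence exactly as in the stability argument: writing $E_n=\lambda_1 K^{-1}E_{n-1}+K^{-1}W_n$ with $W_n=\sum_{l=0}^{n-2}\lambda_{n-l}E_l+R_n$ and unrolling the recursion from $E_0=0$. The stability theorem shows that, under the hypothesis \eqref{3.12}, one has $|\lambda_1|\,\|K^{-1}\|\le 1$, hence $\|(\lambda_1 K^{-1})^{m}\|\le 1$ and $\|K^{-1}\|\le 1/|\lambda_1|$; the homogeneous propagator is therefore non-expansive. Using the sign and summation properties of the coefficients established above, namely $\lambda_0>0$ and $\sum_{l=0}^{n}\lambda_l=0$, together with the monotonicity of the coefficients $a_{n-l}$ and $b_{n-l}$, I would absorb the non-local coupling term and apply a discrete Gr\"onwall inequality to obtain $\|E_n\|\le C(h^2+\tau^{4-\alpha})$ uniformly in $n$. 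Finally, since $u=\omega U-\omega(0)U_0$ with $\omega$ bounded below by a positive constant, dividing by $\omega(t_n)$ converts this into $\max_{i,n}|U(x_i,t_n)-\tilde U(x_i,t_n)|\le C(h^2+\tau^{4-\alpha})$, as claimed.

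The main obstacle I anticipate is controlling the non-local coupling $\sum_{l=0}^{n-2}\lambda_{n-l}E_l$: unlike a one-step scheme, every past error feeds back into the current step, so a naive geometric bound via $(\lambda_1 K^{-1})^{n}$ is insufficient and the constant $C$ must be shown not to grow with $N$. Handling this requires exploiting the precise coefficient structure, the relation $\sum_l\lambda_l=0$, and a Gr\"onwall-type estimate rather than a direct summation; a secondary technical point is passing between the discrete $L^2$ norm used in the stability analysis and the maximum norm appearing in the statement, which I would address through the standard equivalence of discrete norms on the fixed spatial grid.
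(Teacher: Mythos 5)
Your proposal takes essentially the same route as the paper: the paper's own proof of this theorem is exactly the consistency-plus-stability argument, citing the truncation error bound of Theorem \ref{2} and the stability theorem for the system \eqref{f} under hypothesis \eqref{3.12}, and concluding $\max_{i,n}|U(x_i,t_n)-\tilde U(x_i,t_n)|\leq C(h^{2}+\tau^{4-\alpha})$ in two lines. Everything you add beyond that — the explicit error recurrence $KE_n=\lambda_1E_{n-1}+\sum_{l=0}^{n-2}\lambda_{n-l}E_l+R_n$ with $E_0=0$, the Gr\"onwall treatment of the non-local memory term (which the paper's stability theorem, bounding only the homogeneous propagation $(\lambda_1K^{-1})^n e_0$, does not by itself control), and the passage from the transformed variable $u$ back to $U$ via the weight $\omega$ — is detail the paper silently omits rather than a departure from its argument, so your attempt is, if anything, more complete than the printed proof.
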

\begin{proof}
The truncation error for difference scheme at $(x_{i},t_{n}) \in [0,a] \times [0,T]$ is \\
$$|e_{n}^{i}| \leq C(h^{2}+\tau^{4-\alpha})$$ by Theorem \ref{2}. Now using theorem of stability, it implies
\begin{align}
\max_{i,n}|U(x_{i},t_{n})-\tilde{U}(x_{i},t_{n})| \leq C |U(x_{i},t_{0})-\tilde{U}(x_{i},t_{0})|
\end{align}
We obtain the desired result easily after using the truncation error as discussed in Theorem \ref{2}.   
\end{proof}
\section{Numerical results} \label{Sec-5}
In this section, we will check the numerical accuracy of the proposed schemes \eqref{ab} and difference scheme \eqref{sch}, also verify the theoretical convergence order discussed in Theorem \eqref{4}. Here, we provide three examples to numerically support our theory; in the first example we check convergence order and absolute error of approximation for the GCFD, while last two problems get the form \eqref{e4.2} to discribe accuracy and maximum absolute error of the difference scheme. We provide two Tables \eqref{7} and \eqref{10} to compare our scheme \eqref{sch} with Gao et al. (\cite{gao2014new}, Example 4.1) and  Cao et al. (\cite{cao2015high}, Example 5.1), respectively for particular choice of scale $\zeta(t)=t$ and weight $\omega(t)=1$ functions. All numerical results are implemented in MATLAB R2018b. \\
To calculate the maximum absolute error $E_{\infty}$ and error $E_{2}$ corresponding to the $L_{2}$-norm, we use following formulas, respectively.  
\begin{align}
E_{\infty}(M,N)~=~ \max_{1\leq i \leq M-1}|U_{N}^{i}-u_{N}^{i}|, \nonumber \\[1ex]
E_{2}(M,N)~=~\bigg(h\sum_{i=1}^{M-1}|U_{N}^{i}-u_{N}^{i}|^{2}\bigg)^{1/2}, \nonumber
\end{align}
where $\{U_{n}^{i}\}$ is the exact solution of advection diffusion equation and $\{u_{n}^{i}\}$ is the approximate solution at the point $(x_{i},t_{n}).$ \\
Moreover, the convergence order in space and time direction for the described difference scheme corresponding to $L_{\infty}$-norm can be evaluated using the following formulas. $R_{x}$ is the order of convergence in space side and $R_{t}$ for temporal side.
$$R_{x}~=~\frac{log(E(2M,N))-log(E(M,N))}{log(2)},$$\\ and $$R_{t}~=~\frac{log(E(M,2N))-log(E(M,N))}{log(2)}.$$ \nonumber
\begin{Example}\label{5.1} \cite{gao2014new}
Take function $u(t)=t^{4+\alpha}$,~ $t \in [0,1],$~ for~ $0< \alpha <1.$ Determine the $\alpha$-th order GCFD for $u(t)$ at $T=1$ numerically.
\end{Example}
The maximum absolute error and rate of convergence for the scheme \eqref{ab} to approximate the GCFD of function $u(t)$ for $\alpha = 0.2, 0.5, 0.8$ with uniform time steps    $1/10$, $1/20$, $1/40$, $1/80$, $1/160$ are calculated and shown in following tables.\\
Table \ref{1} shows that the errors with respect to $L_{\infty}$-norm and convergence rates in time, and these data are found after calculating the classical Caputo derivative (i.e. taking $\zeta(t)=t,$ $\omega(t)=1$) of function $u(t)$ with the help of scheme \eqref{ab}. From this Table, we can see that the errors of our scheme \eqref{ab} obtaining from GCFD approximation are lesser in compare to scheme developed in Gao et al. \cite{gao2014new} for approximation of Caputo derivative and convergence rate of our scheme is $(4-\alpha),$ while accuracy for time derivative in \cite{gao2014new} is $(3-\alpha).$ 
In Table \ref{2}, to compute the maximum errors $E_{\infty}$ and order of convergence in time direction, we take $\omega(t)=e^{t}$ while scale function is fixed with $t$. From Table \ref{3}, we validate the the convergence for $\zeta(t)=t,$ ~$\omega(t)=t+1$ and the CPU time in seconds for $\alpha=0.8$ are discussed. Table \ref{4} shows that maximum errors and order of convergence for different choice of weight $\omega(t)=t^{0.5},~ t,~ t^{4}$,~$e^{2t}$,~ scale is $\zeta(t)=t$ and $\alpha=1/3$. In all cases, accuracy in time is obtained as  $(4-\alpha)$ for the scheme \eqref{ab}, which is higher than \cite{xu2014numerical,yadav2019high}.  
\begin{table}[H]
	\centering
	\caption{$E_{\infty}$ errors and convergence rates $R_{t}$ for Example \ref{5.1}, when  $\zeta(t)=t,$ $\omega(t)=1$ and different $ \alpha$'s.}\label{1}
	\begin{tabular}{ccccccc}
		\hline
		& ~~~~~~~~$\alpha=0.2$ &                  & ~~~~~~~~$\alpha=0.5$ &                             & ~~~~~~~~$\alpha=0.5$ \cite{gao2014new} \\ \cline{1-7}
		$N$   & $E_{\infty}$ error     & $R_{t}$       & $E_{\infty}$ error    & $R_{t}$       & $E_{\infty}$ error     & $R_{t}$       \\ \hline
		10  & 1.6978e-04       &               & 1.5401e-03       &               & 1.3507e-02       &               \\
		20  & 1.3130e-05       & 3.6928    & 1.4383e-04       & 3.4206    & 2.6121e-03       & 2.3704    \\
		40  & 9.9792e-07       & 3.7178    & 1.3116e-05       & 3.4550    & 4.8618e-04       & 2.4256    \\
		80  & 7.4966e-08       & 3.7346    & 1.1811e-06       & 3.4731    & 8.8645e-05       & 2.4554    \\
		160 & 5.5944e-09       & 3.7442     & 1.0560e-07       & 3.4835    & 1.5975e-05       & 2.4722 \\ \hline
	\end{tabular}
\end{table}
\begin{table}[H]
	\centering
	\caption{$E_{\infty}$ errors and convergence rates $R_{t}$ for Example \ref{5.1}, when $\zeta(t)=t,$ $\omega(t)=e^{t}$ and different $ \alpha $'s.}\label{2}
	\begin{tabular}{cccccccc}
		\hline
		&~~ $\alpha=0.2$ &            &~~ $\alpha=0.5$ &            & ~~$\alpha=0.8$ &            & ~~$\alpha=0.8$ \\ \cline{1-8}
		$N$   & $E_{\infty}$ error     & $R_{t}$       & $E_{\infty}$ error     & $R_{t}$       & $E_{\infty}$ error     & $R_{t}$       & CPU time (s) \\ \hline
		10  & 7.5552e-04       &               & 6.3075e-03       &               & 3.2184e-02       &               & 11.354861    \\
		20  & 7.1075e-05       & 3.4101    & 6.7873e-04       & 3.2162    & 4.2148e-03       & 2.9328    & 33.306962    \\
		40  & 5.9370e-06       & 3.5815    & 6.6677e-05       & 3.3476    & 5.0375e-04       & 3.0647    & 82.772072    \\
		80  & 4.6934e-07       & 3.6610    & 6.2491e-06       & 3.4155    & 5.7497e-05       & 3.1312    & 180.064565   \\
		160 & 3.5650e-08       & 3.7186    & 5.7027e-07       & 3.4539    & 6.4090e-06       & 3.1653    & 419.978865 \\ \hline
	\end{tabular}
\end{table}
\begin{table}[H]
	\centering
	\caption{$E_{\infty}$ errors and convergence rates
		$R_{t}$ for Example \ref{5.1}, when $\zeta(t)=t,$ $\omega(t)=t+1$ and different $ \alpha $'s.}\label{3}
	\begin{tabular}{cccccccc}
	\hline
	&~~ $\alpha=0.2$ &            &~~ $\alpha=0.5$ &            & ~~$\alpha=0.8$ &            & ~~$\alpha=0.8$ \\ \cline{1-8}
	$N$   & $E_{\infty}$ error     & $R_{t}$       & $E_{\infty}$ error     & $R_{t}$       & $E_{\infty}$ error     & $R_{t}$       & CPU time (s) \\ \hline
	10  & 3.5019e-04       &               & 3.1752e-03       &               & 1.7251e-02       &               & 10.194270    \\
	20  & 3.0621e-05       & 3.5155    & 3.1426e-04       & 3.3368    & 2.0904e-03       & 3.0448    & 25.931157    \\
	40  & 2.4466e-06       & 3.6457    & 2.9499e-05       & 3.4132    & 2.3982e-04       & 3.1238    & 65.689186    \\
	80  & 1.8844e-07       & 3.6986    & 2.6976e-06       & 3.4509    & 2.6799e-05       & 3.1617    & 155.623905   \\
	160 & 1.4248e-08       & 3.7252    & 2.4326e-07       & 3.4711    & 2.9560e-06       & 3.1805    & 384.953845   	\\ \hline
\end{tabular}
\end{table}
\begin{table}[H]
\centering
\caption{$E_{\infty}$ errors and convergence rates $R_{t}$ for Example \ref{5.1}, when $\zeta(t)=t,$ $\alpha=\frac{1}{3}$ and different weight functions.}\label{4}
\begin{tabular}{cccccccc}
\hline
&~~ $\omega(t)=t^{0.5}$ &            &~~~ $\omega(t)=t$ &            & ~~~$\omega(t)=t^{4}$ &            & ~~~$\omega(t)=e^{2t}$ \\ \cline{1-8}
$N$ & $E_{\infty}$ error                 & $R_{t}$         & $E_{\infty}$ error           & $R_{t}$         & $E_{\infty}$ error               & $R_{t}$         & $R_{t}$       \\ \hline
10       & 9.5450e-04          &            & 1.7465e-03    &            & 3.2557e-02        &            &             \\
20       & 8.4374e-05          & 3.4999 & 1.5027e-04    & 3.5388 & 2.0348e-03        & 4.0000 & 3.2332            \\
40       & 7.0877e-06          & 3.5734 & 1.2845e-05    & 3.5483 & 1.2749e-04        & 3.9964 & 3.4225           \\
80       & 5.8166e-07          & 3.6071 & 1.0650e-06    & 3.5923 & 1.1169e-05        & 3.5129 & 3.5220           \\
160      & 4.7123e-08          & 3.6257 & 8.6813e-08    & 3.6167 & 9.3941e-07        & 3.5716 & 3.5764	\\ \hline
\end{tabular}
\end{table}
\begin{figure}[h]
	\centering
	\begin{subfigure}{.5\textwidth}
		\centering
		\includegraphics[width=7.5cm,height=6.4cm]{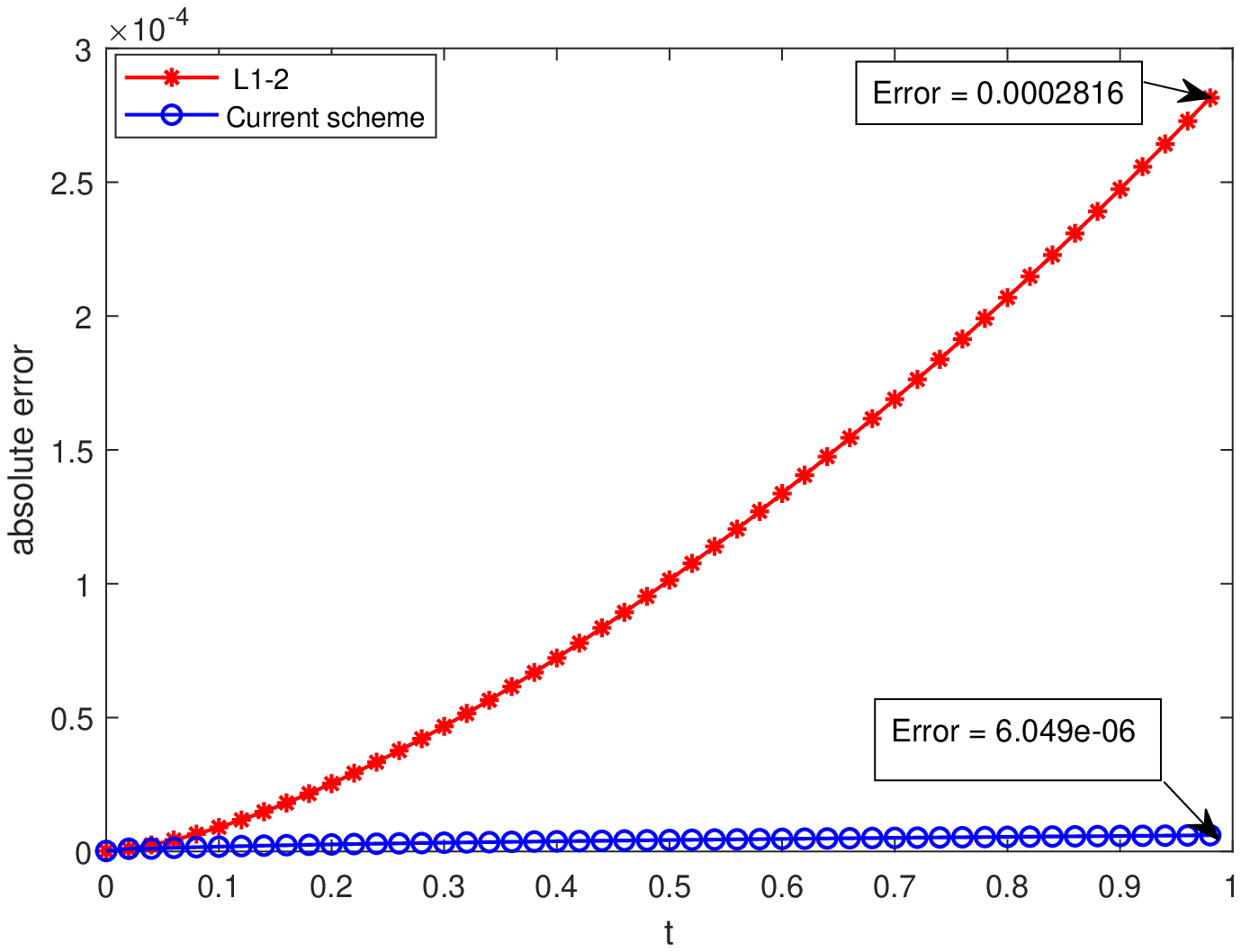}\label{EX1}
		\caption{}
		\label{EX1}
	\end{subfigure}%
	\begin{subfigure}{.5\textwidth}
		\centering
		\includegraphics[width=7.5cm,height=6.4cm]{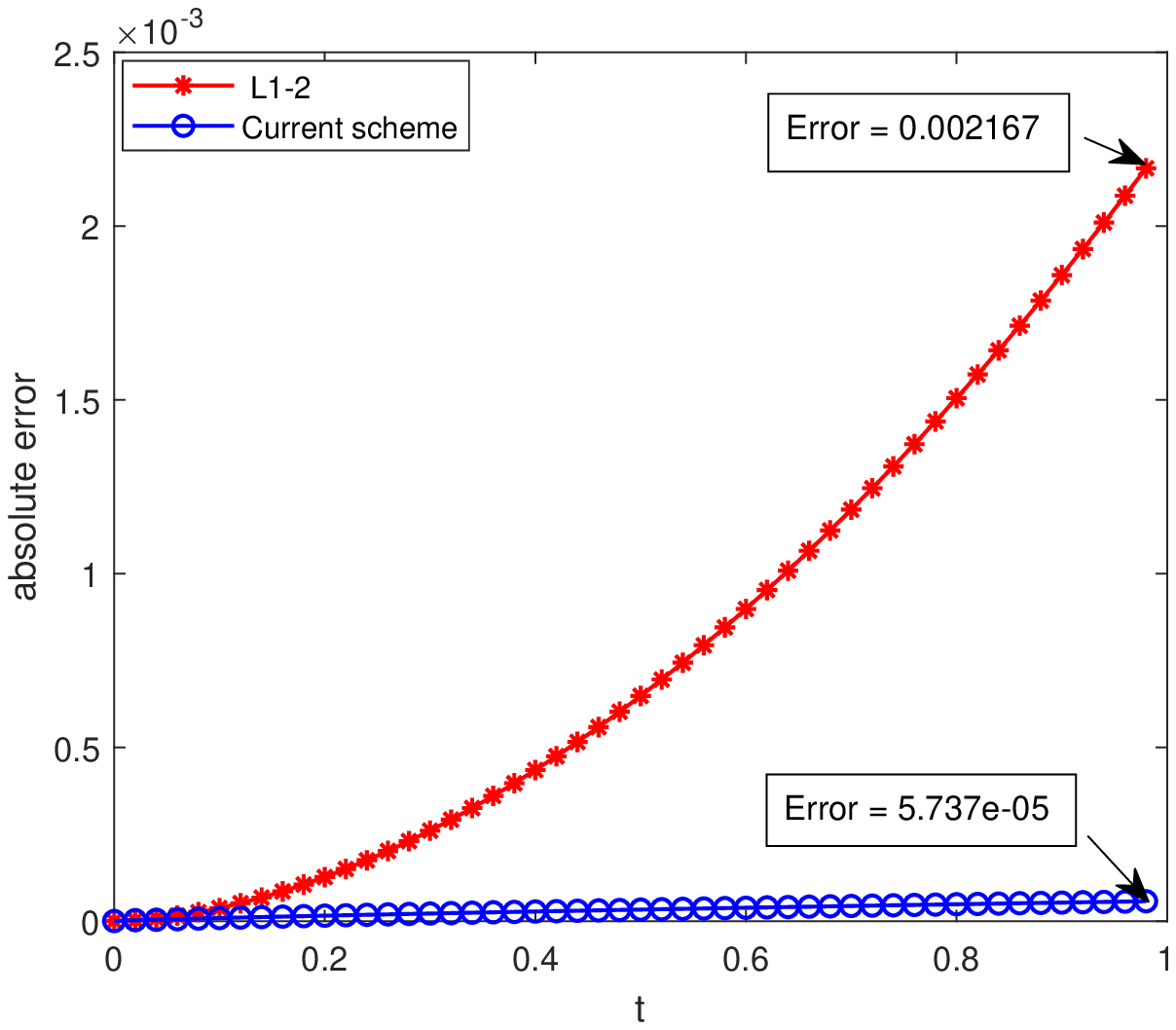}\label{EX2}
		\caption{}
		\label{Ex2}
	\end{subfigure}
	\caption{Errors plot of the numerical results for the Example \eqref{5.1} at final time $T=1$ for different values of $\alpha$ (Left side \eqref{EX1} for $\alpha=0.5$ ; Right side  \eqref{Ex2} for $\alpha=0.8$).}
\end{figure} 
\begin{Example}\label{5.2}
\cite{cao2015high} We take the following generalized fractional advection-diffusion equation: 
\begin{align}
\begin{cases}
^{C}_{0}\mathcal{D}_{t;[\zeta(t),\omega(t)]}^{\alpha}u(x,t)~=~\frac{\partial^{2} u(x,t)}{\partial x^{2}}-\frac{\partial u(x,t)}{\partial x}+f(x,t),~~(x,t) \in (0,1) \times (0,1), \\
u(x,0)=0,~~~~ x \in (0,1),\\
u(0,t)= t^{6+\alpha},~ u(1,t)= et^{6+\alpha},~~~ t \in (0,1],
\end{cases}
\end{align}
where $f(x,t)=e^{x}t^{6}\frac{\Gamma(7+\alpha)}{720}$.
When $\zeta(t)=t$ and $\omega(t)=1$, then  $u(x,t)=e^{x}t^{6+\alpha}$ is the exact solution. 
\end{Example}
To solve this example, we use the numberical scheme defined in \eqref{sch}. The maximum errors at time $t=1$ for different values of $\alpha$ with different step sizes, and rate of convergence in time direction and space direction are displayed in Tables \ref{5} and \ref{6}. In Table \ref{5}, we set $h=\frac{1}{2000}$ and describe the numerical errors and convergence rates in time $R_{t}$ for different values of $N$. In Table \ref{6}, we fix $\tau=\frac{1}{500}$ and present the numerical errors and spatial convergence rates $R_{x}$ for different values of $M$. It is shown that our scheme \eqref{sch} gives $(4-\alpha)$-order convergence in temporal direction and second-order convergence in spatial direction.
\begin{figure}[H]
\centering
\begin{subfigure}{.5\textwidth}
\centering
\includegraphics[width=7.5cm,height=6.4cm]{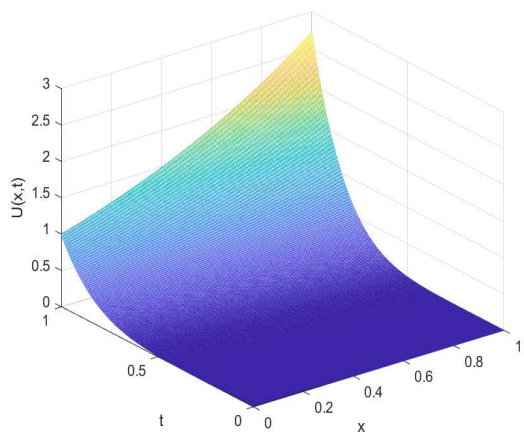}
\caption{Exact solution for $\alpha=0.95$}
\label{EX5:subfig1}
\end{subfigure}%
\begin{subfigure}{.5\textwidth}
\centering
\includegraphics[width=7.5cm,height=6.4cm]{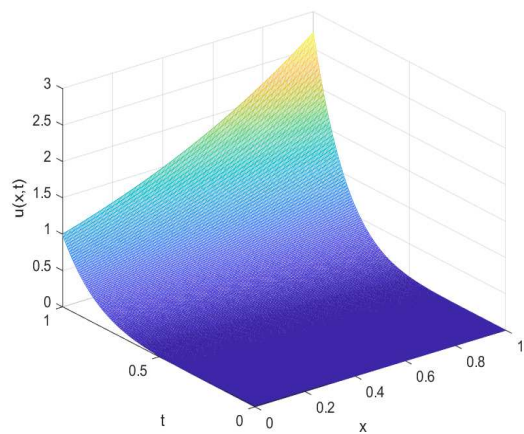}
\caption{Numerical solution for $\alpha=0.95$}
\label{Ex5:subfig2}
\end{subfigure}
\newline
\begin{subfigure}{.5\textwidth}
\centering
\includegraphics[width=7cm,height=6cm]{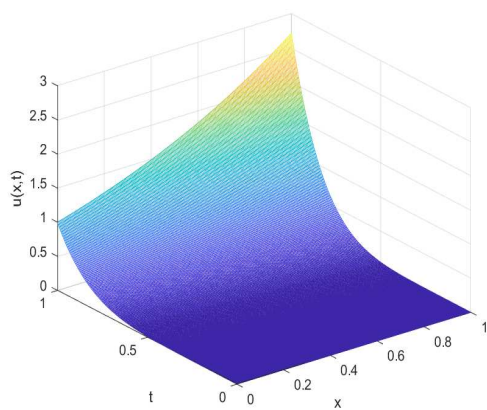}
\caption{Numerical solution for $\alpha=0.6$}
\label{Ex5:subfig3}
\end{subfigure}%
\begin{subfigure}{.5\textwidth}
\centering
\includegraphics[width=7cm,height=6cm]{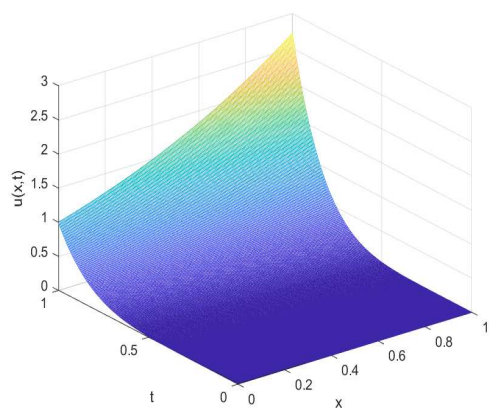}
\caption{Numerical solution for $\alpha=0.45$}
\label{Ex5:subfig4}
\end{subfigure}
\caption{Exact and approximate solutions of Example \ref{5.2} for different $\alpha$'s with $M=N=200$ and $\omega(t)=1$, $\zeta(t)=t$.}
\label{Ex5:fig1}
\end{figure} 
\begin{table}[H]
\centering
\caption{Errors $E_{\infty}$ and $E_{2}$ with convergence rates in time for Example \ref{5.2}, when $h=1/2000$ and different $\alpha$'s.}\label{5}
\begin{tabular}{llllllllllllllllllllllll}  \hline
\multicolumn{4}{l}{$\alpha$} & \multicolumn{4}{l}{$N$} & \multicolumn{4}{l}{~~$E_{\infty}$  error} & \multicolumn{4}{l}{~$R_{t}$} & \multicolumn{4}{l}{~~$E_{2}$  error}  & \multicolumn{4}{l}{~$R_{t}$} \\ \hline
0.8     &      &      &      & 8      &     &     &    & 1.6385e-02     &     &     &     &           &     &     &     & 7.4195e-03   &    &    &    &              &    &    &    \\
&      &      &      & 16     &     &     &    & 2.2920e-03     &     &     &     & 2.8377    &     &     &     & 1.3514e-03   &    &    &    & 2.4569   &    &    &    \\
&      &      &      & 32     &     &     &    & 2.8191e-04     &     &     &     & 3.0233    &     &     &     & 1.8556e-04   &    &    &    & 2.8645   &    &    &    \\
&      &      &      & 64     &     &     &    & 3.2609e-05     &     &     &     & 3.1119    &     &     &     & 2.2573e-05   &    &    &    & 3.0392   &    &    &    \\
&      &      &      & 128    &     &     &    & 3.6574e-06     &     &     &     & 3.1564    &     &     &     & 2.5937e-06   &    &    &    & 3.1215   &    &    &    \\
0.5     &      &      &      & 8      &     &     &    & 3.7940e-03     &     &     &     &           &     &     &     & 1.8168e-03   &    &    &    &              &    &    &    \\
&      &      &      & 16     &     &     &    & 4.2889e-04     &     &     &     & 3.1451    &     &     &     & 2.5900e-04   &    &    &    & 2.8104   &    &    &    \\
&      &      &      & 32     &     &     &    & 4.3064e-05     &     &     &     & 3.3160    &     &     &     & 2.8647e-05   &    &    &    & 3.1765   &    &    &    \\
&      &      &      & 64     &     &     &    & 4.0768e-06     &     &     &     & 3.4010    &     &     &     & 2.8348e-06   &    &    &    & 3.3371   &    &    &    \\
&      &      &      & 128    &     &     &    & 3.7173e-07     &     &     &     & 3.4551    &     &     &     & 2.6407e-07   &    &    &    & 3.4243   &    &    &    \\
0.2     &      &      &      & 8      &     &     &    & 5.4498e-04     &     &     &     &           &     &     &     & 2.7293e-04   &    &    &    &              &    &    &    \\
&      &      &      & 16     &     &     &    & 5.1093e-05     &     &     &     & 3.4150    &     &     &     & 3.1455e-05   &    &    &    & 3.1172   &    &    &    \\
&      &      &      & 32     &     &     &    & 4.2949e-06     &     &     &     & 3.5724    &     &     &     & 2.8822e-06   &    &    &    & 3.4480   &    &    &    \\
&      &      &      & 64     &     &     &    & 3.3828e-07     &     &     &     & 3.6663    &     &     &     & 2.3627e-07   &    &    &    & 3.6087   &    &    &    \\
&      &      &      & 128    &     &     &    & 2.2628e-08     &     &     &     & 3.9020    &     &     &     & 1.6173e-08   &    &    &    & 3.8687   &    &    &   \\ \hline
\end{tabular} 
\end{table}
\begin{figure}[H]
\begin{subfigure}{.5\textwidth}
\includegraphics[width=8cm,height=6.4cm]{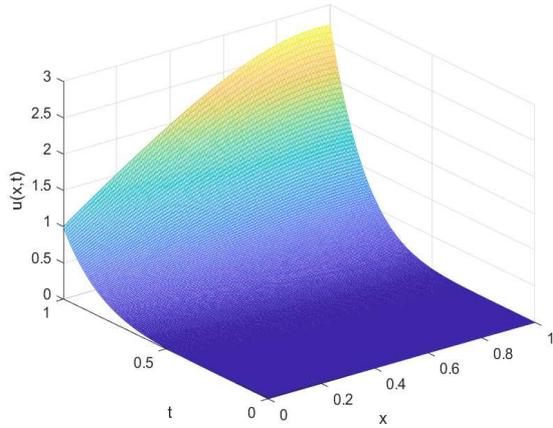}
\caption{$\zeta(t)=e^{t},~ \omega(t)=1$}
\label{Ex5:subfig2}
\end{subfigure}
\begin{subfigure}{.5\textwidth}
\includegraphics[width=8cm,height=6.4cm]{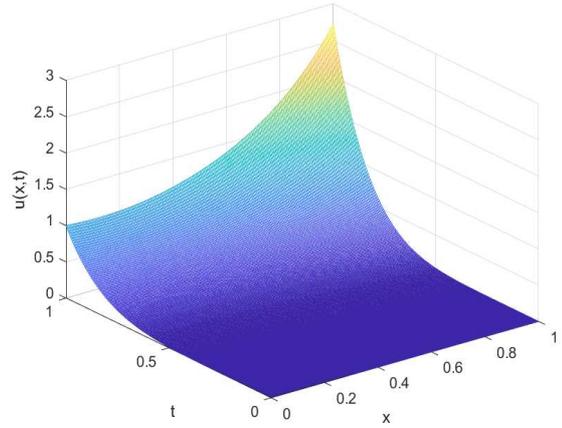}
\caption{$\zeta(t)=t^{1/2},~ \omega(t)=1$}
\label{Ex5:subfig3}
\end{subfigure}
\caption{Numerical solutions of Example \ref{5.2} with different choices of weight functions $\omega(t)$ and scale functions $\zeta(t)$, and $M=N=200$ with $\alpha=0.9$.}
\label{Ex5:fig1}
\end{figure}
Clearly visible from above figures that scale function $\zeta(t)$ can stretch or contract the domain.
\begin{table}[H]
\centering
\caption{The maximum errors and convergence rates $R_{x}$ for Example \ref{5.2}, when $\tau=1/500$ and different $\alpha$'s.}\label{6}
\begin{tabular}{cccccccc}
\hline
& \qquad~~~$\alpha=0.2$ &            &\qquad~~~$\alpha=0.5$ &            &\qquad ~~~$\alpha=0.8$ &            \\ \cline{1-7}  
$M$ & $E_{\infty}$ error          & $R_{x}$         & $E_{\infty}$ error          & $R_{x}$         & $E_{\infty}$ error          & $R_{x}$         \\ \hline
8        & 2.3437e-04   &            & 2.1275e-04   &            & 1.8078e-04   &            \\
16       & 5.8814e-05   & 1.9945 & 5.3334e-05   & 1.9960 & 4.5242e-05   & 1.9985 \\
32       & 1.4733e-05   & 1.9971 & 1.3373e-05   & 1.9958 & 1.1319e-05   & 1.9990 \\
64       & 3.6832e-06   & 2.0001 & 3.3408e-06   & 2.0010 & 2.7940e-06   & 2.0183 \\
128      & 9.2088e-07   & 1.9999 & 8.3276e-07   & 2.0042 & 6.6258e-07   & 2.0762 \\ \hline
\end{tabular}
\end{table}	
\begin{table}[H]
\centering
\caption{The maximum errors and convergence rates $R_{t}$ of Example 4.1 in \cite{gao2014new} (page no 43) for different values of $\alpha$ with $\tau=1/2000.$}\label{7}
\begin{tabular}{cccccc} \hline
&     &\qquad\qquad Current scheme & &\qquad\qquad~~~$L1-2$ \cite{gao2014new} \\ \hline
$\alpha$ &   $N$ & $E_{\infty}$      & $R_{t}$        & $E_{\infty}$   & $R_{t}$     \\ \hline
0.9      & 10  & 2.2536e-03        &                & 1.8600e-02     &             \\
& 20  & 3.2727e-04        & 2.7837     & 4.7228e-03     & 1.9776  \\
& 40  & 4.1906e-05        & 2.9653     & 1.1488e-03     & 2.0395  \\
& 80  & 5.1072e-06        & 3.0366     & 2.7367e-04     & 2.0697  \\
& 160 & 6.1092e-07        & 3.0635     & 6.4520e-05     & 2.0846  \\
0.5      & 10  & 2.5470e-04        &                & 2.4936e-03     &             \\
& 20  & 2.6005e-05        & 3.2919     & 4.8366e-04     & 2.3662  \\
& 40  & 2.4615e-06        & 3.4012     & 9.0163e-05     & 2.4234  \\
& 80  & 2.2827e-07        & 3.4307     & 1.6457e-05     & 2.4539  \\
& 160 & 2.3662e-08        & 3.2701     & 2.9700e-06     & 2.4701  \\ \hline
\end{tabular}
\end{table}
\begin{Example} \label{5.4}
Take the following fractional advection-diffusion equation:
\begin{align}
\begin{cases}
^{C}_{0}\mathcal{D}_{t,[\zeta(t),\omega(t)]}^{\alpha}u(x,t)~=~\frac{\partial^{2} u(x,t)}{\partial x^{2}}-\frac{\partial u(x,t)}{\partial x}+f(x,t),~~(x,t) \in (0,1)\times(0,1), \\
u(x,0)= 0,~~~~ x \in (0,1),\\
u(0,t)= 0,\\ u(1,t)=t^{7}(sin~1) ,~~~ t \in (0,1],
\end{cases}
\end{align}
where $f(x,t)~=~ \frac{\Gamma8}{\Gamma(8-\alpha)}~t^{7-\alpha}sin(x)+t^{7}(sin(x)+cos(x)).$ 	When $\zeta(t)=t$ and $\omega(t)=1$, the exact solution is $u(x,t)=t^{7}sin(x).$  
\end{Example}
To solve this PDE, we use our difference scheme \eqref{sch}. Here, two Tables \ref{8}, \ref{9} are given in support of numerical results. In Table \ref{8}, we take fixed space step size $h=1/2000$ and display maximum absolute errors and errors $E_{2}$ with respect to norm $L_{2}$, also rate of convergence for temporal direction with different time steps $N=8, 16, 32, 64, 128.$~ In Table \ref{9}, we expressed maximum errors and convergence order in space direction to set time steps fix with $\tau=1/500$ and taking different $M=8, 16, 32, 64, 128.$ 
\begin{table}[H]
\centering
\caption{Errors $E_{\infty}$ and $E_{2}$ with convergence rates in time for Example \ref{5.4}, when $h=1/2000$ and different $\alpha$'s.}\label{8}
\begin{tabular}{llllllllllllllllllllllll} \hline
\multicolumn{4}{l}{$\alpha$} & \multicolumn{4}{l}{$N$} & \multicolumn{4}{l}{~~~$E_{\infty}$ error} & \multicolumn{4}{l}{~$R_{t}$} & \multicolumn{4}{l}{~~~$E_{2}$ error} & \multicolumn{4}{l}{$R_{t}$} \\ \hline
0.8     &      &      &      & 8      &     &     &    & 5.2117e-03     &     &     &     &           &     &     &     & 2.2239e-03   &    &    &    &              &    &    &    \\
&      &      &      & 16     &     &     &    & 7.4012e-04     &     &     &     & 2.8159    &     &     &     & 4.1857e-04   &    &    &    & 2.4095   &    &    &    \\
&      &      &      & 32     &     &     &    & 9.1743e-05     &     &     &     & 3.0121    &     &     &     & 5.8367e-05   &    &    &    & 2.8422   &    &    &    \\
&      &      &      & 64     &     &     &    & 1.0658e-05     &     &     &     & 3.1057    &     &     &     & 7.1562e-06   &    &    &    & 3.0279   &    &    &    \\
&      &      &      & 128    &     &     &    & 1.2018e-06     &     &     &     & 3.1486    &     &     &     & 8.2814e-07   &    &    &    & 3.1112   &    &    &    \\
0.5     &      &      &      & 8      &     &     &    & 1.4554e-03     &     &     &     &           &     &     &     & 6.2805e-04   &    &    &    &              &    &    &    \\
&      &      &      & 16     &     &     &    & 1.7116e-04     &     &     &     & 3.0880    &     &     &     & 9.7274e-05   &    &    &    & 2.6907   &    &    &    \\
&      &      &      & 32     &     &     &    & 1.7546e-05     &     &     &     & 3.2862    &     &     &     & 1.1191e-05   &    &    &    & 3.1198   &    &    &    \\
&      &      &      & 64     &     &     &    & 1.6840e-06     &     &     &     & 3.3812    &     &     &     & 1.1323e-06   &    &    &    & 3.3049   &    &    &    \\
&      &      &      & 128    &     &     &    & 1.5961e-07     &     &     &     & 3.3993    &     &     &     & 1.1013e-07   &    &    &    & 3.3620   &    &    &    \\
0.2     &      &      &      & 8      &     &     &    & 2.5522e-04     &     &     &     &           &     &     &     & 1.1012e-04   &    &    &    &              &    &    &    \\
&      &      &      & 16     &     &     &    & 2.5499e-05     &     &     &     & 3.3233    &     &     &     & 1.4486e-05   &    &    &    & 2.9264   &    &    &    \\
&      &      &      & 32     &     &     &    & 2.2221e-06     &     &     &     & 3.5204    &     &     &     & 1.4170e-06   &    &    &    & 3.3538   &    &    &    \\
&      &      &      & 64     &     &     &    & 1.8445e-07     &     &     &     & 3.5906    &     &     &     & 1.2401e-07   &    &    &    & 3.5144   &    &    &    \\
&      &      &      & 128    &     &     &    & 1.8801e-08     &     &     &     & 3.2944    &     &     &     & 1.3014e-08   &    &    &    & 3.2523   &    &    &   \\ \hline
\end{tabular}
\end{table}
\begin{table}[H]
\centering
\caption{The maximum errors and convergence rates $R_{x}$ for Example \ref{5.4}, when  $\tau=1/500$ and differen $\alpha$'s.}\label{9}
\begin{tabular}{cccccccc}
\hline
& \qquad~~$\alpha=0.2$ &            &\qquad~~$\alpha=0.5$ &            &\qquad ~~$\alpha=0.8$ &            \\ \cline{1-7}  
$M $ & ~$E_{\infty}$ error         & $R_{x}$         & ~$E_{\infty}$ error          & $R_{x}$         & ~$E_{\infty}$ error          & $R_{x}$         \\ \hline
8        & 3.0308e-04   &            & 2.7311e-04   &            & 2.3102e-04   &            \\
16       & 7.6029e-05   & 1.9951 & 6.8533e-05   & 1.9946 & 5.8003e-05   & 1.9938 \\
32       & 1.9050e-05   & 1.9968 & 1.7180e-05   & 1.9961 & 1.4560e-05   & 1.9941 \\
64       & 4.7625e-06   & 2.0000 & 4.2962e-06   & 1.9996 & 3.6518e-06   & 1.9953 \\
128      & 1.1909e-06   & 1.9997 & 1.0752e-06   & 1.9985 & 9.2444e-07   & 1.9820 \\ \hline
\end{tabular}
\end{table}	
\begin{table}[H] 
\centering
\caption{The maximum errors and convergence rates $R_{t}$ for Example 5.1 in \cite{cao2015high}, when $h=1/6000$ and spatial convergence rates $R_{x}$, when $\tau=1/200.$ }\label{10} 
\begin{tabular}{cccccccc} \hline
& $\alpha=0.368$ &            & Cao et al. \cite{cao2015high} &            &  $\alpha=0.368$ & &Cao et al. \cite{cao2015high}  \\  \cline{1-4} \cline{5-8}
$N$ & ~$E_{\infty}$ error            & $R_{t}$         & $R_{t}$         & $M$ & ~$E_{\infty}$ error              & $R_{x}$              & $R_{x}$         \\ \hline
8      & 8.1539e-04     &            &            & 4      & 8.8034e-04       &                 &            \\
16     & 7.7720e-05     & 3.3911 & 3.4031     & 8      & 2.2559e-04       & 1.9643      & 1.9643     \\
32     & 6.8845e-06     & 3.4969 & 3.4972     & 16     & 5.6583e-05       & 1.9953      & 1.9953     \\
64     & 5.8739e-07     & 3.5510 & 3.5528     & 32     & 1.4173e-05       & 1.9972      & 1.9972     \\
128    & 4.9904e-08     & 3.5571 & 3.5836     & 64     & 3.5359e-06       & 2.0030      & 2.0030   \\ \hline
\end{tabular}
\end{table}
In Table $10$, we validate the proposed scheme with \cite{cao2015high} (Example 5.1). Firstly, we present the max error, the convergence order with fixed $h=\frac{1}{6000}$ and varying $N$ with $8$, $16$, $32$, $64$, $128$~ for~ $\alpha=0.368$. It is noted that the convergence order of our scheme \eqref{sch} for temporal direction is almost the same as \cite{cao2015high}. After that, we expressed maximum error and convergence rate for spatial dimension to set $\tau=1/200$ and changing $M=4, 8, 16, 32, 64$~ for same value of $\alpha$, we observe that the spatial convergence order of our numerical scheme is same as \cite{cao2015high}. Thus, the scheme presented in \cite{cao2015high} becomes a particular case of the proposed scheme \eqref{sch} for $\zeta(t)=t$ and $\omega(t)=1$. Also, we can compute numerical results to for different suitable choices of scale and weight functions.     
\section{Conclusion}\label{Sec-6}
A high-order numerical scheme based on cubic interpolation formula is discussed for approximation of GCFD of $\alpha$-th order. Properties of discretized coefficients are analyzed and local truncation error in approximation of GCFD is also discussed. Further, we establish a difference scheme for generalized fractional advection-diffusion equation using the developed approximation formula for GCFD. The stability and convergence of this established scheme to approximate the time fractional generalized advection-diffusion equation are studied. Order of accuracy for the difference scheme is $\mathcal{O}(\tau^{4-\alpha}+h^{2}),$ where step-sizes $\tau$ in time and $h$ in space direction. The convergence order of the difference scheme is described by some numerical experiments.\par
Numerical calculations reveal that the proposed difference scheme has $(4-\alpha)-$th order convergence in time; and in space direction, it has second order convergence. The temporal rate of convergence of the scheme for generalized fractional advection-diffusion equation is highest accuracy to date. In addition, the developed scheme can be directly used to get the other Caputo-type of time fractional advection-diffusion equations by selecting the suitable scale $\zeta(t)$ and weight $\omega(t)$ functions in the generalized Caputo-type fractional derivative. The developed scheme is tested for the cases having smooth solutions of the considered fractional advection-diffusion equation. However, the case of the nonsmooth solutions will be presented in our future works.

\section*{Declarations}
\textbf{Funding}: The authors declare that no funds, grants, or other support were received during the preparation of this manuscript.\\\\
\textbf{Conflict of interest}:
The authors declare that they have no known competing financial interests or personal relationships that could have appeared to influence the work reported in this paper.
\pagebreak
\bibliographystyle{unsrt}
\bibliography{Ref1.bibtex}

\begin{thebibliography}{10}

\bibitem{yadav2019high}
Swati Yadav, Rajesh~K Pandey, Anil~K Shukla, and Kamlesh Kumar.
\newblock High-order approximation for generalized fractional derivative and
  its application.
\newblock {\em International Journal of Numerical Methods for Heat \& Fluid
  Flow}, 29(9):3515--3534, 2019.

\bibitem{agrawal2012some}
Om~P Agrawal.
\newblock Some generalized fractional calculus operators and their applications
  in integral equations.
\newblock {\em Fractional Calculus and Applied Analysis}, 15(4):700--711, 2012.

\bibitem{benson2000application}
David~A Benson, Stephen~W Wheatcraft, and Mark~M Meerschaert.
\newblock Application of a fractional advection-dispersion equation.
\newblock {\em Water resources research}, 36(6):1403--1412, 2000.

\bibitem{zhou2003application}
Liuzong Zhou and HM~Selim.
\newblock Application of the fractional advection-dispersion equation in porous
  media.
\newblock {\em Soil Science Society of America Journal}, 67(4):1079--1084,
  2003.

\bibitem{baleanu2017chaos}
Dumitru Baleanu, Guo-Cheng Wu, and Sheng-Da Zeng.
\newblock Chaos analysis and asymptotic stability of generalized caputo
  fractional differential equations.
\newblock {\em Chaos, Solitons \& Fractals}, 102:99--105, 2017.

\bibitem{dan2005solving}
Debasis Dan, Chris Mueller, Kun Chen, and James~A Glazier.
\newblock Solving the advection-diffusion equations in biological contexts
  using the cellular potts model.
\newblock {\em Physical Review E}, 72(4):041909, 2005.

\bibitem{verwer1996implicit}
JG~Verwer, JG~Blom, and W~Hundsdorfer.
\newblock An implicit-explicit approach for atmospheric transport-chemistry
  problems.
\newblock {\em Applied Numerical Mathematics}, 20(1-2):191--209, 1996.

\bibitem{Dehghan2004}
Mehdi Dehghan.
\newblock Weighted finite difference techniques for the one-dimensional
  advection--diffusion equation.
\newblock {\em Appl. Math. Comput.}, 147(2):307--319, 2004.

\bibitem{mohebbi2010high}
Akbar Mohebbi and Mehdi Dehghan.
\newblock High-order compact solution of the one-dimensional heat and
  advection--diffusion equations.
\newblock {\em Applied mathematical modelling}, 34(10):3071--3084, 2010.

\bibitem{owolabi2020high}
Kolade~M Owolabi.
\newblock High-dimensional spatial patterns in fractional reaction-diffusion
  system arising in biology.
\newblock {\em Chaos, Solitons \& Fractals}, 134:109723, 2020.

\bibitem{podlubny1998fractional}
Igor Podlubny.
\newblock {\em Fractional differential equations: an introduction to fractional
  derivatives, fractional differential equations, to methods of their solution
  and some of their applications}.
\newblock Elsevier, 1998.

\bibitem{anatoly2001hadamard}
A~Kilbas Anatoly.
\newblock Hadamard-type fractional calculus.
\newblock {\em Journal of the Korean Mathematical Society}, 38(6):1191--1204,
  2001.

\bibitem{gaboury2013some}
Sebastien Gaboury, Richard Tremblay, and Beno{\^\i}t-Jean Fug{\`e}re.
\newblock Some relations involving a generalized fractional derivative
  operator.
\newblock {\em Journal of Inequalities and Applications}, 2013(1):1--9, 2013.

\bibitem{atangana2016new}
Abdon Atangana and Dumitru Baleanu.
\newblock New fractional derivatives with nonlocal and non-singular kernel:
  theory and application to heat transfer model.
\newblock {\em arXiv preprint arXiv:1602.03408}, 2016.

\bibitem{mustapha2020l1}
Kassem Mustapha.
\newblock An l1 approximation for a fractional reaction-diffusion equation, a
  second-order error analysis over time-graded meshes.
\newblock {\em SIAM Journal on Numerical Analysis}, 58(2):1319--1338, 2020.

\bibitem{alikhanov2015new}
Anatoly~A Alikhanov.
\newblock A new difference scheme for the time fractional diffusion equation.
\newblock {\em Journal of Computational Physics}, 280:424--438, 2015.

\bibitem{abu2018robin}
Omar Abu~Arqub.
\newblock Numerical solutions for the robin time-fractional partial
  differential equations of heat and fluid flows based on the reproducing
  kernel algorithm.
\newblock {\em International Journal of Numerical Methods for Heat \& Fluid
  Flow}, 28(4):828--856, 2018.

\bibitem{li2018error}
Zhiqiang Li and Yubin Yan.
\newblock Error estimates of high-order numerical methods for solving time
  fractional partial differential equations.
\newblock {\em Fractional Calculus and Applied Analysis}, 21(3):746--774, 2018.

\bibitem{lv2016error}
Chunwan Lv and Chuanju Xu.
\newblock Error analysis of a high order method for time-fractional diffusion
  equations.
\newblock {\em SIAM Journal on Scientific Computing}, 38(5):A2699--A2724, 2016.

\bibitem{cao2015high}
Jianxiong Cao, Changpin Li, and YangQuan Chen.
\newblock High-order approximation to caputo derivatives and caputo-type
  advection-diffusion equations (ii).
\newblock {\em Fractional calculus and Applied analysis}, 18(3):735--761, 2015.

\bibitem{xu2013numerical}
Yufeng Xu and Om~P Agrawal.
\newblock Numerical solutions and analysis of diffusion for new generalized
  fractional burgers equation.
\newblock {\em Fractional Calculus and Applied Analysis}, 16(3):709--736, 2013.

\bibitem{kumar2021numerical}
Kamlesh Kumar, Rajesh~K Pandey, and Farheen Sultana.
\newblock Numerical schemes with convergence for generalized fractional
  integro-differential equations.
\newblock {\em Journal of Computational and Applied Mathematics}, 388:113318,
  2021.

\bibitem{diethelm2011efficient}
Kai Diethelm.
\newblock An efficient parallel algorithm for the numerical solution of
  fractional differential equations.
\newblock {\em Fractional Calculus and Applied Analysis}, 14(3):475--490, 2011.

\bibitem{zheng2010note}
Yunying Zheng, Changpin Li, and Zhengang Zhao.
\newblock A note on the finite element method for the space-fractional
  advection diffusion equation.
\newblock {\em Computers \& Mathematics with Applications}, 59(5):1718--1726,
  2010.

\bibitem{mardani2018meshless}
A~Mardani, Mohammad~Reza Hooshmandasl, Mohammad~Hossein Heydari, and Carlo
  Cattani.
\newblock A meshless method for solving the time fractional
  advection--diffusion equation with variable coefficients.
\newblock {\em Computers \& Mathematics with Applications}, 75(1):122--133,
  2018.

\bibitem{li2017high}
Changpin Li and Min Cai.
\newblock High-order approximation to caputo derivatives and caputo-type
  advection--diffusion equations: Revisited.
\newblock {\em Numerical functional analysis and optimization}, 38(7):861--890,
  2017.

\bibitem{tian2014polynomial}
WenYi Tian, Weihua Deng, and Yujiang Wu.
\newblock Polynomial spectral collocation method for space fractional
  advection--diffusion equation.
\newblock {\em Numerical Methods for Partial Differential Equations},
  30(2):514--535, 2014.

\bibitem{zhuang2009numerical}
Pinghui Zhuang, Fawang Liu, Vo~Anh, and Ian Turner.
\newblock Numerical methods for the variable-order fractional
  advection-diffusion equation with a nonlinear source term.
\newblock {\em SIAM Journal on Numerical Analysis}, 47(3):1760--1781, 2009.

\bibitem{singhapproximation}
Deeksha Singh, Farheen Sultana, and Rajesh~K Pandey.
\newblock Approximation of caputo-prabhakar derivative with application in
  solving time fractional advection-diffusion equation.
\newblock {\em International Journal for Numerical Methods in Fluids}, 2022.

\bibitem{cao2018finite}
Wen Cao, Yufeng Xu, and Zhoushun Zheng.
\newblock Finite difference/collocation method for a generalized
  time-fractional kdv equation.
\newblock {\em Applied Sciences}, 8(1):42, 2018.

\bibitem{Xu2013}
Yufeng Xu, Zhimin He, and Om~P Agrawal.
\newblock Numerical and analytical solutions of new generalized fractional
  diffusion equation.
\newblock {\em Computers \& Mathematics with Applications}, 66(10):2019--2029,
  2013.

\bibitem{owolabi2022numerical}
Kolade~M Owolabi and Edson Pindza.
\newblock Numerical simulation of chaotic maps with the new generalized
  caputo-type fractional-order operator.
\newblock {\em Results in Physics}, 38:105563, 2022.

\bibitem{odibat2020numerical}
Zaid Odibat and Dumitru Baleanu.
\newblock Numerical simulation of initial value problems with generalized
  caputo-type fractional derivatives.
\newblock {\em Applied Numerical Mathematics}, 156:94--105, 2020.

\bibitem{xu2014numerical}
Yufeng Xu, Zhimin He, and Qinwu Xu.
\newblock Numerical solutions of fractional advection--diffusion equations with
  a kind of new generalized fractional derivative.
\newblock {\em International Journal of Computer Mathematics}, 91(3):588--600,
  2014.

\bibitem{gao2014new}
Guang-hua Gao, Zhi-zhong Sun, and Hong-wei Zhang.
\newblock A new fractional numerical differentiation formula to approximate the
  caputo fractional derivative and its applications.
\newblock {\em Journal of Computational Physics}, 259:33--50, 2014.

\bibitem{Xu2014}
Yufeng Xu, Zhimin He, and Qinwu Xu.
\newblock Numerical solutions of fractional advection--diffusion equations with
  a kind of new generalized fractional derivative.
\newblock {\em International Journal of Computer Mathematics}, 91(3):588--600,
  2014.

\end{thebibliography}

\end{document}